\newtheorem{thm}{Theorem}[section]
\newtheorem{lem}[thm]{Lemma}
\newtheorem{prop}[thm]{Proposition}
\newtheorem{prop-def}[thm]{Proposition-Definition}
\theoremstyle{definition}
\newtheorem{rem}[thm]{Remark}
\newtheorem{rems}[thm]{Remarks}
\newcommand\oUA{\!\!\underset{U(A)}{\otimes}\!\!}
\newcommand\oUL{\!\!\underset{U(L)}{\otimes}\!\!}
\newcommand\oR{\!\underset{R}{\otimes}\!}
\author{Damien Calaque}
\title{A PBW theorem for inclusions of (sheaves of) \\ Lie algebroids}
\date{}
\begin{document}

\maketitle

\begin{abstract}
\noindent{\bf Abstract.} Inspired by the recent work of Chen-Sti\'enon-Xu on {\it Atiyah classes} associated 
to inclusions of Lie algebroids, we give a very simple criterium (in terms of those classes) for relative 
Poincar\'e-Birkhoff-Witt type results to hold. The tools we use (e.g.~the {\it first infinitesimal 
neighbourhood Lie algebroid}) are straightforward generalizations of the ones previously developped by 
C\u ald\u araru, Tu and the author for Lie algebra inclusions. 
\end{abstract}

{\footnotesize \tableofcontents}

\section{Introduction}

\subsection{General context}

This paper is part of a more general project which aims at building a dictionnary between Lie theory and algebraic geometry. 

In \cite{AC} Arinkin and C\u{a}ld\u{a}raru provide a necessary and sufficient condition for the Ext-algebra of a closed subvariety 
$X$ (a ``brane'') of an algebraic variety $Y$ to be isomorphic, as an object of the derived category of $X$, to $S(N[-1])$, where 
$N$ is the normal bundle of $X$ into $Y$ ; the condition is that $N$ can be lifted to the first infinitesimal neighbourhood $X^{(1)}$. 
This condition is equivalent to the vanishing of a certain class in $Ext^2_{\mathcal O_X}(N^{\otimes 2},N)$. 

This result has been translated into Lie theory in \cite{CCT} by C\u{a}ld\u{a}raru, Tu and the author as follows. 
For an inclusion of Lie algebras $\mathfrak h\subset\mathfrak g$, we gave a necessary and sufficient condition 
for $U(\mathfrak g)/U(\mathfrak g)\mathfrak h$ to be isomorphic, as an $\mathfrak h$-module, to $S(\mathfrak g/\mathfrak h)$ ; 
the condition is that the quotient module $\mathfrak n=\mathfrak g/\mathfrak h$ extends to a Lie algebra $\mathfrak h^{(1)}$ 
``sitting in between'' $\mathfrak h$ and $\mathfrak g$. 
Similarly, this condition is equivalent to the vanishing of a certain class in $Ext^1_{\mathfrak h}(\mathfrak n^{\otimes 2},\mathfrak n)$. 

It is Kapranov who observed in \cite{Ka} that the shifted tangent bundle $T_X[-1]$ of an algebraic variety $X$ 
is a Lie algebra object in the derived category of $X$, with Lie bracket being given by the Atiyah class of $T_X[-1]$. 
Moreover, any object of the derived category becomes a representation of this Lie algebra {\it via} its own Atiyah class. 
In the case of a closed embedding $i:X\hookrightarrow Y$ we then get an inclusion of Lie algebras objects $T_X[-1]\subset i^*T_Y[-1]$, 
so that the main result of \cite{AC} can be deduced, in principle, from a version of the main result of \cite{CCT} that would hold 
in a triangulated category. We refer to the introduction of \cite{CCT} and to \cite{Cal} for more details on this striking analogy. 

In \cite{CCT2} we have exhibited a Lie algebroid structure on the shifted normal bundle $N[-1]$ of a closed embedding $X\hookrightarrow Y$. 
Extending the results of \cite{CCT} to the Lie algebroid setting then seems quite natural, especially if one wants to understand the geometry of a sequence of closed embeddings $X\hookrightarrow Y\hookrightarrow Z$. 

\subsection{Description of the main results}

For simplicity of exposition we assume in this introduction that ${\bf k}$ is a field of characteristic zero. 
We let $X$ be a topological space equipped with a sheaf of ${\bf k}$-algebras $\mathcal R$, and 
$\mathcal A\subset\mathcal L$ be an inclusion of sheaves of Lie algebroids over $\mathcal R$ (we refer to Section 
\ref{sec-2} for standard Definitions), which are locally free as $\mathcal R$-modules. The locally free 
$\mathcal R$-module $\mathcal L/\mathcal A$ turns out to be naturally equipped with an action of $\mathcal A$ 
(see \S~\ref{sec-L/A}), also-known-as a flat $\mathcal A$-connection. 

In \cite{CSX} Chen, Sti\'enon and Xu introduce a very interesting class 
$\alpha_{\mathcal E}\in{\rm Ext}^1_{\mathcal A}\big((\mathcal L/\mathcal A)\otimes_{\mathcal R}\mathcal E,\mathcal E\big)$, 
for any $\mathcal A$-module $\mathcal E$, which is the obstruction to the existence of a lift of the flat 
$\mathcal A$-connection on $\mathcal E$ to a possibly non-flat $\mathcal L$-connection. They define this class 
in geometric terms, while we provide in this paper a purely algebraic description of $\alpha_{\mathcal E}$ 
(see \S~\ref{sec-alpha}) which makes sense in a wider context. 

Inspired by a previous work \cite{CCT} of the author with C\u ald\u araru and Tu we also introduce a new Lie algebroid 
$A^{(1)}$, called the {\it first infinitesimal neighbourhood Lie algebroid}, which fits in between $\mathcal A$ and 
$\mathcal L$ in the sense that we have a sequence of Lie algebroid morphisms 
$\mathcal A\longrightarrow\mathcal A^{(1)}\longrightarrow \mathcal L$. 

We then prove the following result, which generalizes \cite[Theorem 1.3]{CCT}: 
\begin{thm}\label{thm-main}
The following statements are equivalent: 

(1) The class $\alpha_{\mathcal L/\mathcal A}$ vanishes. 

(2) The $\mathcal A$-module structure on $\mathcal L/\mathcal A$ lifts to an $A^{(1)}$-module structure. 

(3) $U(\mathcal L)/U(\mathcal L)\mathcal A$ is isomorphic, as a filtered $\mathcal A$-module, 
to $S_{\mathcal R}(\mathcal L/\mathcal A)$. 
\end{thm}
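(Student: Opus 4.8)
The plan is to establish the cycle of implications $(1)\Leftrightarrow(2)$, $(2)\Rightarrow(3)$ and $(3)\Rightarrow(1)$. First I would reduce to the case in which $\mathcal{R}$, $\mathcal{A}$ and $\mathcal{L}$ are free over a contractible open subset of $X$ — so that a Poincar\'e--Birkhoff--Witt basis of $U(\mathcal{L})$ over $\mathcal{R}$ is available — and afterwards check that the constructions are natural enough to glue; alternatively, the whole argument can be run directly on sheaves using the algebraic presentation of $\alpha_{\mathcal{E}}$ given in \S\ref{sec-alpha}. Write $\mathcal{N}:=\mathcal{L}/\mathcal{A}$, fix an $\mathcal{R}$-linear splitting $\sigma\colon\mathcal{N}\hookrightarrow\mathcal{L}$ of $\mathcal{L}\twoheadrightarrow\mathcal{N}$, and recall that the PBW theorem for enveloping algebras of Lie algebroids locally free over $\mathcal{R}$ identifies $\mathrm{gr}\,U(\mathcal{L})$ with $S_{\mathcal{R}}(\mathcal{L})$; ordering a PBW basis so that a basis of $\mathcal{A}$ comes last, it follows that the associated graded of the left $\mathcal{A}$-module $M:=U(\mathcal{L})/U(\mathcal{L})\mathcal{A}$ is $S_{\mathcal{R}}(\mathcal{L})/S_{\mathcal{R}}(\mathcal{L})\mathcal{A}\cong S_{\mathcal{R}}(\mathcal{N})$, as graded $\mathcal{A}$-modules for the symmetric-algebra extension of the flat $\mathcal{A}$-connection of \S\ref{sec-L/A}.

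\textbf{$(1)\Leftrightarrow(2)$.}
Here I expect no real work beyond comparing the two constructions. By the definition of $\mathcal{A}^{(1)}$ and the morphisms $\mathcal{A}\to\mathcal{A}^{(1)}\to\mathcal{L}$, an $\mathcal{A}^{(1)}$-module structure on $\mathcal{N}$ restricting to the canonical $\mathcal{A}$-action is exactly a lift of the flat $\mathcal{A}$-connection on $\mathcal{N}$ to a (not necessarily flat) $\mathcal{L}$-connection; and, by the algebraic description of \S\ref{sec-alpha}, such a lift exists if and only if $\alpha_{\mathcal{N}}=\alpha_{\mathcal{L}/\mathcal{A}}$ vanishes in $\mathrm{Ext}^1_{\mathcal{A}}(\mathcal{N}\otimes_{\mathcal{R}}\mathcal{N},\mathcal{N})$. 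So $(1)$ and $(2)$ are equivalent.

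\textbf{$(2)\Rightarrow(3)$.}
This is the substantive direction. I would start from the symmetrization maps $S^k_{\mathcal{R}}(\mathcal{N})\to F_{\le k}M$, $n_1\cdots n_k\mapsto\frac1{k!}\sum_{\tau\in\mathfrak{S}_k}\sigma(n_{\tau(1)})\cdots\sigma(n_{\tau(k)})\bmod U(\mathcal{L})\mathcal{A}$, which assemble into a filtered $\mathcal{R}$-linear isomorphism $S_{\mathcal{R}}(\mathcal{N})\to M$ inducing the identity on associated graded; I would then check that it is already $\mathcal{A}$-linear in degrees $0$ and $1$ (using that $\mathcal{A}\subseteq U(\mathcal{L})\mathcal{A}$ and that, for $a\in\mathcal{A}$ and $n\in\mathcal{N}$, one has $[a,\sigma(n)]\equiv\sigma(a\cdot n)\bmod U(\mathcal{L})\mathcal{A}$, with $a\cdot n$ the Bott action). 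The obstruction to $\mathcal{A}$-linearity in degree $2$ is then a $1$-cocycle with values in $\mathrm{Hom}_{\mathcal{R}}(S^2_{\mathcal{R}}(\mathcal{N}),\mathcal{N})$ representing (the symmetrization of) $\alpha_{\mathcal{L}/\mathcal{A}}$; granting $(1)$/$(2)$ it is a coboundary, and the $\mathcal{A}^{(1)}$-lift furnishes the correction — an $\mathcal{A}$-linear map $S^2_{\mathcal{R}}(\mathcal{N})\to F_{\le 1}M$ — that makes the degree-$2$ map $\mathcal{A}$-linear. The hard part will be showing that the higher-degree obstructions are not independent: because $S_{\mathcal{R}}(\mathcal{N})$ is generated in degree one and the degree-$k$ defect is produced from the degree-$2$ one through the Lie algebroid bracket on $\mathcal{L}$ and the (co)multiplication of $S_{\mathcal{R}}(\mathcal{N})$, they all vanish once $\alpha_{\mathcal{L}/\mathcal{A}}$ does. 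This is the mechanism of the proof of \cite[Theorem 1.3]{CCT}, and re-running it over the base $\mathcal{R}$ — keeping track of the anchor and of the Rinehart relations — is the technical core. One thus obtains a filtered $\mathcal{A}$-linear map $S_{\mathcal{R}}(\mathcal{N})\to M$ which, inducing the identity on associated graded, is an isomorphism.

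\textbf{$(3)\Rightarrow(1)$.}
Conversely, a filtered $\mathcal{A}$-linear isomorphism $M\cong S_{\mathcal{R}}(\mathcal{N})$ splits, $\mathcal{A}$-equivariantly, the short exact sequence $0\to\mathcal{N}\to F_{\le 2}M/F_{\le 0}M\to S^2_{\mathcal{R}}(\mathcal{N})\to 0$ read off from the first two steps of the filtration (on the symmetric-algebra side this sequence is visibly split). Since the class of that extension in $\mathrm{Ext}^1_{\mathcal{A}}(S^2_{\mathcal{R}}(\mathcal{N}),\mathcal{N})$ is, by \S\ref{sec-alpha}, the symmetrization of $\alpha_{\mathcal{L}/\mathcal{A}}$, it follows that $\alpha_{\mathcal{L}/\mathcal{A}}$ vanishes, which closes the cycle. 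Beyond \cite{CCT}, the only genuinely new bookkeeping is sheaf-theoretic: one either carries out the degree-by-degree construction of $(2)\Rightarrow(3)$ over a cover and checks independence of $\sigma$ up to isomorphism so that the local isomorphisms glue, or — more cleanly — phrases the inductive correction directly in terms of the sheaf-level description of $\alpha_{\mathcal{E}}$.
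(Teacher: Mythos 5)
Your outline of $(1)\Leftrightarrow(2)$ matches Proposition \ref{prop-3.2}, and your $(3)\Rightarrow(1)$ is essentially Proposition \ref{prop-if} (one pulls back the splitting of $0\to G^1/G^0\to G^2/G^0\to G^2/G^1\to 0$ along $(\mathcal L/\mathcal A)\otimes_{\mathcal R}(\mathcal L/\mathcal A)\to G^2/G^1$ to split the sequence \eqref{eq-alphabis} defining $\alpha$). The genuine gap is in $(2)\Rightarrow(3)$. You correctly identify the degree-$2$ obstruction of the symmetrization map with $\alpha_{\mathcal L/\mathcal A}$, but the assertion that the higher-degree defects ``are produced from the degree-$2$ one'' and hence all vanish once $\alpha_{\mathcal L/\mathcal A}$ does is precisely the content of the theorem, and your sketch supplies no mechanism for it: after correcting the map in degree $2$, the degree-$3$ defect changes, and there is no a priori reason it is a coboundary controlled by $\alpha$. (In \cite{CCT} this point is handled by a Koszulity argument; the present paper deliberately avoids that route.) The paper removes the induction entirely: once the $\mathcal A$-action on $\mathcal L/\mathcal A$ lifts to $\mathcal A^{(1)}$, the whole tensor algebra $T_{\mathcal R}(\mathcal L/\mathcal A)$ carries the twisted action $l\bullet P:=l\cdot P+\bar l\otimes P$ of Lemma \ref{lem-4.4}, which is an honest $\mathcal A^{(1)}$-module structure; the map $P\mapsto P\bullet 1$ is then $\mathcal A^{(1)}$-linear by construction, and one only checks it is an isomorphism on associated graded (Lemma \ref{lem-useful}). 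Statement (3) follows by composing $S_{\mathcal R}(\mathcal L/\mathcal A)\hookrightarrow T_{\mathcal R}(\mathcal L/\mathcal A)\cong U(\mathcal A^{(1)})/U(\mathcal A^{(1)})\mathcal A\twoheadrightarrow U(\mathcal L)/U(\mathcal L)\mathcal A$ and invoking the Rinehart isomorphism $(\star)$ together with coCartesianity of \eqref{eq-coCart}; the symmetric algebra and condition $(\star\star)$ enter only at this last stage, when passing from $\mathcal A^{(1)}$ to $\mathcal L$, not at the level of the first infinitesimal neighbourhood.

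A second problem is the local-to-global reduction. Statement (3) asks for a global isomorphism of sheaves of filtered $\mathcal A$-modules, and local isomorphisms built from local splittings $\sigma$ and local correction terms need not glue; knowing they agree ``up to isomorphism'' on overlaps leaves a nontrivial descent obstruction. The paper sidesteps this because every step of its construction is canonical given a single global datum, namely the $\mathcal A^{(1)}$-module lift of $\mathcal L/\mathcal A$, whose existence is governed by the global class $\alpha_{\mathcal L/\mathcal A}$; consequently the entire argument sheafifies verbatim, which is all that Section \ref{sec-7} needs to say.
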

Here $S$ and $U$ denote the symmetric and the universal enveloping algebra, respectively. 

We can also prove a more general version of the above result for $\mathcal A$-modules other than 
${\bf 1}_{\mathcal{A}}$ (compare with \cite[Theorem 5.1]{CCT}): 
\begin{thm}\label{thm-E0}
Let $\mathcal E$ be an $\mathcal A$-module which is locally free as an $\mathcal R$-module. 
Then the  following statements are equivalent: 

(1) The classes $\alpha_{\mathcal L/\mathcal A}$ and $\alpha_{\mathcal E}$ vanish. 

(2) The $\mathcal A$-module structures on $\mathcal L/\mathcal A$ and $\mathcal E$ lift to $A^{(1)}$-module structures. 

(3) $U(\mathcal L)\otimes_{U(\mathcal A)}\mathcal E$ is isomorphic, as a filtered $\mathcal A$-module, 
to $S_{\mathcal R}(\mathcal L/\mathcal A)\otimes_{\mathcal R}\mathcal E$. 
\end{thm}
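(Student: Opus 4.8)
The plan is to adapt the strategy of \cite[\S\S4--5]{CCT} to the present relative setting, establishing $(1)\Leftrightarrow(2)$ directly and then the implications $(2)\Rightarrow(3)\Rightarrow(1)$, freely invoking Theorem~\ref{thm-main} (the case $\mathcal E=\mathbf 1_{\mathcal A}$) and the algebraic description of the Chen--Sti\'enon--Xu class from \S\ref{sec-alpha}. The equivalence $(1)\Leftrightarrow(2)$ is the one formal input: for an $\mathcal A$-module $\mathcal F$ which is locally free over $\mathcal R$, it follows from the construction of $\mathcal A^{(1)}$ and of $\alpha$ that the lifts of the $\mathcal A$-action on $\mathcal F$ to an $\mathcal A^{(1)}$-module structure form a (possibly empty) torsor under $\mathrm{Hom}_{\mathcal A}\big((\mathcal L/\mathcal A)\otimes_{\mathcal R}\mathcal F,\mathcal F\big)$, which is non-empty exactly when $\alpha_{\mathcal F}=0$. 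Applying this with $\mathcal F=\mathcal L/\mathcal A$ and with $\mathcal F=\mathcal E$ yields $(1)\Leftrightarrow(2)$ at once.

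For $(2)\Rightarrow(3)$: since $\mathcal A^{(1)}$ is a Lie algebroid over $\mathcal R$, its category of modules is symmetric monoidal under $\otimes_{\mathcal R}$, so the chosen $\mathcal A^{(1)}$-lifts of $\mathcal L/\mathcal A$ and of $\mathcal E$ endow each $S^n_{\mathcal R}(\mathcal L/\mathcal A)\otimes_{\mathcal R}\mathcal E$ — hence $S_{\mathcal R}(\mathcal L/\mathcal A)\otimes_{\mathcal R}\mathcal E$ with its filtration — with an $\mathcal A^{(1)}$-module structure refining its tautological $\mathcal A$-module structure. On the other side, $U(\mathcal L)\otimes_{U(\mathcal A)}\mathcal E$ carries the order filtration, and the Poincar\'e--Birkhoff--Witt theorem for Lie--Rinehart algebras (together with a standard filtered-flatness argument, using that $\mathcal A$, $\mathcal L$, $\mathcal E$ are locally free over $\mathcal R$) identifies its associated graded canonically with $S_{\mathcal R}(\mathcal L/\mathcal A)\otimes_{\mathcal R}\mathcal E$. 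I would then construct, by induction on the filtration degree, a morphism of filtered $\mathcal A$-modules
\[
\Phi\colon\ S_{\mathcal R}(\mathcal L/\mathcal A)\otimes_{\mathcal R}\mathcal E\ \longrightarrow\ U(\mathcal L)\otimes_{U(\mathcal A)}\mathcal E
\]
inducing this identification on associated graded; such a $\Phi$ is automatically a filtered isomorphism, which is $(3)$. Concretely $\Phi$ is a twisted symmetrization map: in degrees $\le 1$ it amounts to an $\mathcal A$-equivariant splitting of the surjection $F_1\big(U(\mathcal L)\otimes_{U(\mathcal A)}\mathcal E\big)\twoheadrightarrow(\mathcal L/\mathcal A)\otimes_{\mathcal R}\mathcal E$, which exists precisely because of the $\mathcal A^{(1)}$-structures and is the step where the morphism $\mathcal A^{(1)}\to\mathcal L$ enters; in higher degrees one propagates $\Phi$ order by order, the obstruction to extending it from degree $n-1$ to degree $n$ being an $\mathcal A$-equivariant map between symmetric powers of $\mathcal L/\mathcal A$ (tensored with $\mathcal E$) that is forced to vanish once the $\mathcal A^{(1)}$-refinement is exploited, in the relative analogue of the key computation of \cite[\S4]{CCT}. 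Everything is local and natural, so the locally constructed $\Phi$'s glue over $X$. This implication is the main obstacle: arranging the twist so that $\Phi$ is honestly $\mathcal A$-equivariant, and controlling the higher-degree obstructions, is the technical heart of the proof.

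For $(3)\Rightarrow(1)$: a filtered $\mathcal A$-module isomorphism $U(\mathcal L)\otimes_{U(\mathcal A)}\mathcal E\cong S_{\mathcal R}(\mathcal L/\mathcal A)\otimes_{\mathcal R}\mathcal E$ forces every short exact sequence $0\to F_{n-1}/F_{n-2}\to F_n/F_{n-2}\to F_n/F_{n-1}\to 0$ of $\mathcal A$-modules to split $\mathcal A$-equivariantly. For $n=1$ this is the splitting of $0\to\mathcal E\to F_1\to(\mathcal L/\mathcal A)\otimes_{\mathcal R}\mathcal E\to 0$; computing its class with a local $\mathcal R$-linear section of $\mathcal L\twoheadrightarrow\mathcal L/\mathcal A$ and comparing with \S\ref{sec-alpha} identifies it with $\alpha_{\mathcal E}$, whence $\alpha_{\mathcal E}=0$. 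For $n=2$, after using $\alpha_{\mathcal E}=0$ to normalise the section, the class of $0\to(\mathcal L/\mathcal A)\otimes_{\mathcal R}\mathcal E\to F_2/F_0\to S^2_{\mathcal R}(\mathcal L/\mathcal A)\otimes_{\mathcal R}\mathcal E\to 0$ is, up to a coboundary, $\alpha_{\mathcal L/\mathcal A}\otimes\mathrm{id}_{\mathcal E}$, where $\alpha_{\mathcal L/\mathcal A}$ is viewed as a class on $S^2_{\mathcal R}(\mathcal L/\mathcal A)$ — legitimate since the relative Atiyah class is symmetric, cf.~\cite{CSX}. Since $\mathcal E$ is locally free over $\mathcal R$, the operation $-\otimes_{\mathcal R}\mathcal E$ is faithful on the relevant $\mathrm{Ext}^1_{\mathcal A}$ (a retraction is given by $-\otimes_{\mathcal R}\mathcal E^\vee$ followed by the trace), so vanishing of this class yields $\alpha_{\mathcal L/\mathcal A}=0$, completing the cycle. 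Finally, if one drops the characteristic-zero hypothesis of the introduction, symmetric powers and symmetrization should be replaced by divided powers throughout — both in the identification of $\mathrm{gr}\big(U(\mathcal L)\otimes_{U(\mathcal A)}\mathcal E\big)$ and in these low-degree computations — but the structure of the argument is unchanged.
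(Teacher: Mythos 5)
Your overall architecture matches the paper's: you get (1)$\Leftrightarrow$(2) from the first infinitesimal neighbourhood (this is Proposition \ref{prop-3.2}, and your torsor remark is a reasonable repackaging of it), and you get (3)$\Rightarrow$(1) by splitting the low filtration pieces and using faithfulness of the locally free module $\mathcal E$ --- the paper does the same via diagrams \eqref{eq-diag3} and \eqref{eq-diag4}, except that it pulls the degree-two extension back along the surjection $(\mathcal L/\mathcal A)^{\otimes 2}\otimes_{\mathcal R}\mathcal E\twoheadrightarrow S^2_{\mathcal R}(\mathcal L/\mathcal A)\otimes_{\mathcal R}\mathcal E$ instead of invoking symmetry of the Atiyah class to descend $\alpha_{\mathcal L/\mathcal A}$ to $S^2$; that difference is cosmetic.

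The genuine gap is in (2)$\Rightarrow$(3). You propose to build $\Phi$ order by order and assert that the obstruction to extending it from degree $n-1$ to degree $n$ ``is forced to vanish once the $\mathcal A^{(1)}$-refinement is exploited.'' That assertion \emph{is} the theorem, and your sketch supplies no mechanism for it: the obstruction at stage $n$ is a priori a class in ${\rm Ext}^1_{\mathcal A}\big(S^n_{\mathcal R}(\mathcal L/\mathcal A)\otimes_{\mathcal R}\mathcal E,\,G^{n-1}_E\big)$ (an Ext class, not an equivariant map as you write), and the mere existence of $\mathcal A^{(1)}$-module structures on $\mathcal L/\mathcal A$ and $\mathcal E$ does not visibly kill it degree by degree. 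The paper's solution is not inductive at all: one endows the \emph{whole tensor algebra} $T_{\mathcal R}(\mathcal L/\mathcal A)$ with a $U(\mathcal A^{(1)})$-module structure in one stroke via $l\bullet P:=l\cdot P+\bar l\otimes P$ (Lemma \ref{lem-4.4}), so that $P\mapsto P\bullet 1$ defines a single filtered $\mathcal A^{(1)}$-linear map $U(\mathcal A^{(1)})/U(\mathcal A^{(1)})\mathcal A\to T_{\mathcal R}(\mathcal L/\mathcal A)$, shown to be an isomorphism on associated graded by Lemma \ref{lem-useful}; one then tensors with $\mathcal E^{(1)}$ via $\eta_E$, descends from $T_{\mathcal R}$ to $S_{\mathcal R}$ using the symmetrization splitting $(\star\star)$, and passes from $U(\mathcal A^{(1)})$ to $U(\mathcal L)$ using the coCartesian square \eqref{eq-coCart} together with Rinehart's PBW $(\star)$. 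Your sketch never brings in $U(\mathcal A^{(1)})$ as an algebra, nor the tensor algebra $T_{\mathcal R}(\mathcal L/\mathcal A)$, and these are exactly what make the obstructions tractable; appealing instead to ``the key computation of \cite[\S 4]{CCT}'' points at the Koszul-duality route that the author explicitly avoids here because of the non-centrality of $\mathcal R$ in $U(\mathcal L)$. As it stands, the inductive step --- which you yourself flag as the technical heart --- remains unproved.
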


\subsection{Plan of the paper}

We start with some recollection about Lie algebroids in Section \ref{sec-2}: e.g.~we recall definitions and basic 
properties of the universal enveloping algebra, the de Rham (or Chevalley-Eilenberg) complex and jets of a given Lie 
algebroid, as well as the construction of free Lie algebroids. 
In Section \ref{sec-3} we introduce natural objects associated to an inclusion of Lie 
algebroids: the associated quotient module, the Chen-Sti\'enon-Xu class \cite{CSX}, and the first infinitesimal 
neighbourhood Lie algebroid. We also interprete the Chen-Sti\'enon-Xu class as the obstruction to extend modules 
to the first infinitesimal neighbourhood. Section \ref{sec-4} and \ref{sec-5} are the heart of the paper: the 
fourth Section is devoted to the statement and the proof of a PBW type theorem for inclusions into first infinitesimal 
neighbourhoods, from which we deduce the general case in the fifth Section. We then extend the previous results to 
general modules in Section \ref{sec-6} (this Section is to Sections \ref{sec-4} and \ref{sec-5} what Theorem 
\ref{thm-E0} is to Theorem \ref{thm-main}). We finally sheafify everything and prove the two Theorems of the 
Introduction in the last Section. We end the paper with an appendix where we prove that two classes coincide. 

\subsection{Notation and conventions}

Unless otherwise specified ${\bf k}$ is a commutative ring, all algebraic structures we 
consider are defined over ${\bf k}$, and all filtrations are ascending, indexed 
by non-negative integers. By an {\it $n$-filtered morphism} we mean a morphism that raises the 
filtration degree by $n$. A {\it filtered morphism} (a-k-a morphism of filtered objects) is a 
$0$-filtered morphism. 

We now describe our conventions regarding tensor products. For a commutative ring $R$, we write 
$\otimes_R$ for the tensor product of {\bf left} $R$-modules. As there is no ambiguity we define 
$\otimes:=\otimes_{\bf k}$. For a (possibly noncommutative) ring $B$, we denote by $\underset{B}{\otimes}$ 
the tensor product between right and left $B$-modules. 

For a left $R$-module $M$ we denote by $S_R(M)$, resp.~$T_R(M)$, the symmetric, resp.~tensor, algebra generated 
by $M$ over $R$. Both are considered as graded $R$-algebras; however, we don't require $R$ to be central in 
$R$-algebras. We write $S_R^k(M)$, resp.~$T_R^k(M)$, for the $k$-the homogeneous component. 

\subsubsection*{Acknowledgements}

I thank Darij Grinberg for his careful reading of a preliminary version of this paper.  
His comments helped me correct some mistakes and improve the exposition.  
I also thank the anonymous referee for her/his extremely valuable suggestions. 
This project has been partially supported by a grant from the Swiss National Science Foundation (project number $200021\underline{~}137778$). 

\section{Lie algebroids and associated structures}\label{sec-2}

Let $R$ be a commutative ${\bf k}$-algebra and $L$ a {\it Lie algebroid} over $R$, 
which means that the pair $(R,L)$ is a Lie-Rinehart algebra (see \cite{R}). Namely, $L$ is a Lie 
${\bf k}$-algebra equipped with an $R$-module structure and an $R$-linear Lie algebra map 
$\rho:L\to {\rm Der}_{\bf k}(R)$ such that $[l,rl']=r[l,l']+\rho(l)(r)l'$ for $l,l'\in L$ 
and $r\in R$. The map $\rho$ is called the {\it anchor map} and we usually omit its symbol 
from the notation: for $l\in L$ and $r\in R$, we write $l(r):=\rho(l)(r)$. In particular 
$R\oplus L$ inherits the structure of a Lie ${\bf k}$-algebra with bracket given by 
$[(r,l),(r',l')]=(l(r')-l'(r),[l,l'])$, for $r,r'\in R$ and $l,l'\in L$.  

What we discuss in this Section is relatively standard and can be found e.g.~in \cite{R,X,Ka2} 
and references therein (perhaps phrased in a different way). 

\subsection{The universal enveloping algebra of a Lie algebroid}\label{sec-2.1}

We define the enveloping algebra $U(R,L)$ of the pair $(R,L)$ to be the quotient 
of positive part of the universal enveloping algebra\footnote{By this we mean the subalgebra generated by 
$R\oplus L$ (i.e.~the kernel of the natural ${\bf k}$-augmentation). } of the Lie ${\bf k}$-algebra 
$R\oplus L$ by the following relations: $r\otimes l=rl$ ($r\in R$, $l\in R\oplus L$). As there 
is no risk of confusion we simply write $U(L)$ for $U(R,L)$, which is obviously an $R$-algebra {\it via} 
the natural map $R\longrightarrow U(L)$. It therefore inherits an $R$-bimodule structure. 

It turns out that $U(L)$ is also a cocommutative coring in {\bf left} $R$-modules\footnote{We would like 
to warn the reader that the multiplication is defined on $U(L)\oR U(L)$ while the comultiplication takes 
values in $U(L)\otimes_R U(L)$, where only the left $R$-module structure is used. }. 
Namely, the coproduct $\Delta:U(L)\longrightarrow U(L)\otimes_R U(L)$ is the multiplicative map defined 
on generators by $\Delta(r)=r\otimes 1=1\otimes r$ ($r\in R$) and $\Delta(l)=l\otimes1+1\otimes l$ ($l\in L$). 
The anchor map can be extended to an $R$-algebra morphism $U(L)\longrightarrow {\rm End}(R)$ (actually taking values 
in the ring ${\rm Diff}(R)$ of differential operators) sending $r\in R$ to the multiplication by $r$ and $l\in L$ to $\rho(l)$. The counit 
$\epsilon:U(L)\longrightarrow R$ is defined by $\epsilon(P):=P(1)$. 
\begin{rem}
The above definition of $\Delta$ needs some explanation. Being the quotient of $U(L)\otimes U(L)$ 
by the right ideal generated by $r\otimes 1-1\otimes r$ $(r\in R$), $U(L)\otimes_R U(L)$ is {\bf not} an algebra. 
Nevertheless, one easily sees that $r\otimes 1$ ($r\in R$) and $l\otimes1+1\otimes l$ ($l\in L$) sit in the normalizer 
of that ideal, so that multiplying them together makes perfect sense. 
\end{rem}
In what follows, left $U(L)$-modules are called $L$-modules. We say that a given (left) $R$-module $E$ is acted 
on by $L$ if it is equipped with an $L$-module structure of which the restriction to $R$ gives back the original 
$R$-action we started with. The abelian category $L$-$mod$ of $L$-modules is monoidal, with product being $\otimes_R$ 
(and $U(L)$ acting on a tensor product {\it via} the coproduct) and unit ${\bf 1}_L$ being $R$ equipped with 
the action given by the anchor $\rho$. 

\medskip

Any morphism $f:L\longrightarrow L'$ of Lie algebroids over $R$ automatically induces a morphism 
of algebras $U(L)\longrightarrow U(L')$ which preserves all the above algebraic structures. We denote the 
restriction (or pull-back) functor $L'\textrm{-}mod\longrightarrow L\textrm{-}mod$ by $f^*$, and by 
$f_!:=U(L')\oUL -$ its left adjoint. Notice that $f^*$ is monoidal, while $f_!$ is not ($f_!$ is only colax-monoidal). 

\medskip

There is a canonical filtration on $U(L)$ obtained by assigning degree $0$, resp.~$1$, to elements of 
$R$, resp.~$L$. All structures we have defined so far on $U(L)$ respect this filtration. If, additionally, $L$ is 
itself equipped with a filtration, then this filtration extends to $U(L)$. 
The canonical filtration on $U(L)$ can be seen as coming from the obvious ``constant'' filtration on $L$ (the only degree $0$ 
element is $0$ and all elements in $L$ are of degree $\leq1$). 

\begin{rem}\label{rem-2.2}
One can alternatively describe the functor $U$ as a left adjoint. 
Namely, we consider the category of {\it anchored algebras}: they are defined as $R$-algebras $B$ equipped 
with an $R$-algebra morphism $\rho:B\longrightarrow {\rm End}(R)$, where the $R$-algebra structure on ${\rm End}(R)$ 
is the given by $r\longmapsto(l_r:b\mapsto rb)$. 
There is a functor $Prim$ from anchored algebras to Lie algebroids that sends an anchored algebra $B$ to the sub-$R$-module 
consisting of those elements $b\in B$ such that $\rho(b)\in {\rm Der}(R)$. 
We then have an adjuntion 
$$U:\{\textrm{Lie algebroids}\}\begin{matrix}\,\longrightarrow \\[-0.3cm] \longleftarrow\,\end{matrix}\{\textrm{anchored algebras}\}:Prim\,.$$
\end{rem}

\subsection{The de Rham complex of a Lie algebroid}

To any $L$-module $E$ we associate the complex of graded $R$-modules $C^\bullet(L,E)$, consisting of 
${\rm Hom}_R(\wedge_R^\bullet L,E)$ equipped with the differential $d$ defined as follows: for 
$\omega\in C^n(L,E)$ and $l_0,\dots,l_n\in L$, 
$$
d(\omega)(l_0,\dots,l_n):=\sum_{i=0}^n(-1)^il_i\omega\big(l_0,\dots,\widehat{l_i},\dots,l_n\big)
+\sum_{i<j}(-1)^{i+j}\omega\big([l_i,l_j],l_0,\dots,\widehat{l_i},\dots,\widehat{l_j},\dots,l_n\big)\,.
$$
The map that associates to $l\in L$ the element $\nabla_l\in{\rm End}_{{\bf k}}(E)$ defined by 
$\nabla_l(e):=d(e)(l)$ is sometimes called a flat connection. It completely determines both the differential 
$d$ and the $L$-action on $E$. 

\medskip

We have the following functoriality property: for $f:L\to L'$ a morphism of Lie algebroids over $R$ and 
$\varphi:E\to F$ a morphism of $L'$-modules, we have an 
obvious $R$-linear map $f^*\varphi:C^\bullet(L',E)\longrightarrow C^\bullet(L,f^*F)$ defined by 
$(f^*\varphi)(\omega):=\varphi\circ\omega\circ f$. 
We also have that for any two $L$-modules $E$ and $F$, there is a product 
$C^\bullet(L,E)\otimes C^\bullet(L,F)\longrightarrow C^\bullet(L,E\otimes_R F)$. 
In particular, this turns $C^\bullet(L):=C^\bullet(L,{\bf 1}_L)$ into a differential graded 
commutative $R$-algebra. 

\subsection{Lie algebroid jets}

For any $L$-module $E$ we define the $L$-module $J_L(E)$ of {\it $L$-jets}, or simply {\it jets}, as the internal 
Hom ${\rm Hom}_R\big(U(L),E\big)$ from the universal enveloping algebra $U(L)$ to $E$. 

This requires some explanation. First of all observe that the monoidal category $L$-$mod$ is closed. 
The internal Hom of two $L$-modules $E$ and $F$ is given by the $R$-module ${\rm Hom}_R(E,F)$ equipped with 
the following $L$-action: for $l\in L$, $\psi:E\to F$ and $e\in E$, 
$(l\cdot\psi)(e):=l\cdot\big(\psi(e)\big)-\psi(l\cdot e)$. 
Then $U(L)$ is naturally an $L$-module (being a left $U(L)$-module over itself). 

\medskip

But $U(L)$ is actually an $U(L)$-bimodule. Therefore, $J_L(E)$ inherits a second left $U(L)$-module structure, 
denoted $*$, which commutes with the above one and is defined in the following way: 
for $\phi\in J_L(E)$ and $P,Q\in U(L)$, $(P*\phi)(Q)=\phi(QP)$. 
When $E={\bf 1}_L$ the two commuting $L$-module structures one gets on $J_L:=J_L({\bf 1}_E)$ are precisely 
the ones described in \cite[\S~4.2.5]{CVdB}. 

\begin{rem}\label{rem-bimod}
This is actually true for any $U(L)$-$U(L')$-bimodule $M$: the space ${\rm Hom}_R(M,E)$ has an $L$-module and 
an $L'$-module structures that commute\footnote{Notice that even the two underlying $R$-module structures are 
different. }. In particular, the space ${\rm Hom}_{L\textrm{-}mod}(M,E)$ itself is naturally an $L'$-module. 
\end{rem}

\subsection{Free Lie algebroids (after M.~Kapranov)}\label{ss-free}

Let us first recall from \cite{Ka2} that an $R$-module $M$ is {\it anchored} if it is equipped with an $R$-linear map 
$\rho:M\longrightarrow {\rm Der}(R)$, called the {\it anchor map}. 
Like for Lie algebroids we usually omit the symbol $\rho$ from our notation: for $m\in M$ and $r\in R$, 
we write $m(r):=\rho(m)(r)$. There is an obvious forgetful functor which goes from the category of 
Lie algebroids to that of anchored modules, that forgets everything except the underlying $R$-module 
structure of the Lie algebroid and the anchor map. This functor has a left adjoint, denoted $FR$. 

For any anchored $R$-module $M$ we call $FR(M)$ the {\it free Lie algebroid} generated by $M$. 
It can be described in the following way, as a filtered quotient of the free Lie ${\bf k}$-algebra 
$FL(M)$ generated by $M$. First of all, by adjunction $\rho$ naturally extends to a Lie ${\bf k}$-algebra morphism 
$FL(M)\longrightarrow {\rm Der}(R)$. 
Then we define $FR(M)$ as the quotient of $FL(M)$ by the following relations: for $r\in R$, $m\in FL(M)$, 
and $m'\in FL(M)$, $[m,rm']-[rm,m']=m(r)m'+m'(r)m$. These relations being satisfied in ${\rm Der}(R)$ then 
$\rho$ factors through $FR(M)$. 
Finally, we define an $R$-module structure on $FR(M)$ in the following way: $r[m,m']:=[m,rm']-m(r)m'=[rm,m']+m'(r)m$. 

\medskip 

According to Remark \ref{rem-2.2} we then have a sequence of adjunctions
$$
\{\textrm{anchored modules}\}\,
\begin{matrix}\overset{FR}{\longrightarrow} \\[-0.3cm] \longleftarrow\end{matrix}\,
\{\textrm{Lie algebroids}\}\,
\begin{matrix}\overset{U}{\longrightarrow} \\[-0.3cm] \underset{Prim}{\longleftarrow}\end{matrix}\,
\{\textrm{anchored algebras}\}\,.
$$

\begin{rem}
The above sequence of adjunctions extends to filtered versions. 
Unless otherwise specified, the canonical filtration we put on an anchored module $M$ is the ``constant'' one we already mentioned 
in \S~\ref{sec-2.1}. Then the associated graded of the induced filtration on $FR(M)$ is the free Lie $R$-algebra $FL_R(M)$ generated by $M$, 
and the associated graded of the induced filtration on $U\big(FR(M)\big)$ is $U\big(FL_R(M)\big)=T_R(M)$. 
\end{rem}

\section{Structures associated to inclusions of Lie algebroids}\label{sec-3}

Let $R$ be a commutative ${\bf k}$-algebra and $i:A\hookrightarrow L$ an inclusion of Lie algebroids over $R$. 

\subsection{The $A$-module $L/A$}\label{sec-L/A}

It is well-known that $A$ does not necessarily act on itself (meaning that $A$ is not an $A$-module in any natural way). 
In this paragraph we consider the quotient $R$-module $L/A$ and define an $A$-action on it in the following way: 
for any $a\in A$ and any $l\in L$, we define $a\cdot(l+A):=[a,l]+A$ (when there is no ambiguity we omit 
the inclusion symbol $i$ from the notation). The only nontrivial identity to check is that $(ra)\cdot=r(a\cdot)$: 
$$
(ra)\cdot(l+A)=[ra,l]+A=r[a,l]-\underbrace{l(r)a}_{\in A}+A=r[a,l]+A=r\big(a\cdot(l+A)\big)\,.
$$

\medskip

From now and in the rest of the paper we make the following assumption: \\

\indent\indent\indent {\it The map $L\longrightarrow U(L)$ is injective. }
\hfill $(\O)$

\subsection{The extension class $\alpha$ (inspired by Chen-Sti\'enon-Xu)}\label{sec-alpha}

Let $E$ be an $A$-module. We define a class $\alpha_E\in{\rm Ext}^1_{A}\big((L/A)\otimes_RE,E\big)$, 
which generalizes the one introduced in \cite{CCT} for Lie algebras, {\it via} the following short exact 
sequence of $A$-modules: 
\begin{equation}\label{eq-alphabis}
0\longrightarrow E\longrightarrow \left(U(L)\oUA E\right)^{\leq1}\longrightarrow L/A\otimes_RE\longrightarrow 0\,.
\end{equation}
We have to explain why the middle term in \eqref{eq-alphabis} is an $A$-module, which is {\it a priori} not guaranteed. 
Namely, even though $U(L)$ is an $A$-module ({\it via} left multiplication) its filtered pieces $U(L)^{\leq k}$ are 
not (because $AU(L)^{\leq k}\subset U(L)^{\leq k+1}$). Nevertheless, $U(L)\oUA E$ turns out to be 
a filtered $A$-module because of the following: for $a\in A$, $P\in U(L)^{\leq k}$ and $e\in E$, 
\begin{equation}\label{eq-AmodU}
a(P\otimes e)=aP\otimes e=\big([a,P]+Pa\big)\otimes e=[a,P]\otimes e+P\otimes ae\in (U(L)\oUA E)^{\leq k}\,.
\end{equation}

We set $\alpha:=\alpha_{L/A}$. 

\subsubsection*{Relation to Atiyah classes as they are defined by Chen-Sti\'enon-Xu in \cite{CSX}}

We consider the filtered subspace $J_{L/A}(E)$ of $J_L(E)$ consisting of those maps 
$\phi:U(L)\to E$ which are $A$-linear: for $Q\in U(A)$ and $P\in U(L)$, 
\begin{equation}\label{eq-comp2}
\phi(QP)=Q\cdot\phi(P)\,.
\end{equation}
According to Remark \ref{rem-bimod} there is a residual $L$-module structure on $J_{L/A}(E)$: 
for $Q\in U(L)$, we have $(Q*\phi)(P)=\phi(PQ)$. 
Even though the successive quotients $J_L^n(E):={\rm Hom}_R\big(U(L)^{\leq n},E\big)$ of $J_L(E)$ are not 
$U(L)$-bimodules, it turns out that their subspaces $J_{L/A}^n(E)$ inherits an $A$-action from the above 
residual $L$-action. Namely, for $Q\in U(A)$, $\phi\in J_{L/A}(E)$ and $P\in U(L)$ we have
\begin{equation}\label{eq-Amod}
(Q*\phi)(P)=\phi(PQ)=\phi([P,Q]+QP)=\phi([P,Q])+Q\cdot\big(\phi(P)\big)\,.
\end{equation}
Therefore $Q*$ descends to $J_{L/A}^n(E)$. We then have the following exact sequence of $A$-modules: 
\begin{equation}\label{eq-alpha}
0\longrightarrow {\rm Hom}_R(L/A,E)\longrightarrow J^1_{L/A}(E)\longrightarrow E\longrightarrow 0\,.
\end{equation}
This determines a class $\widetilde\alpha_E\in{\rm Ext}^1_A\big(E,{\rm Hom}_R(A,E)\big)$, which has been 
first defined in a differential geometric context by Chen-Sti\'enon-Xu in \cite[\S~2.5.1]{CSX}. 
\begin{prop}\label{prop-sameclasses}
The images of the classes $\alpha_E$ and $\widetilde\alpha_E$ coincide in 
$$
{\rm Hom}_{D(A)}\big((L/A)\overset{\mathbb{L}}{\otimes}_RE,E[1]\big)
\cong{\rm Hom}_{D(A)}\big(E,\mathbb{R}{\rm Hom}_R(L/A,E)[1]\big)\,,
$$
where $D(A)$ denotes the bounded derived category of $A$-modules. 
\end{prop}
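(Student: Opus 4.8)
The plan is to exhibit both classes as coming from a single object in the derived category of $A$-modules --- namely the complex $\bigl(U(L)\oUA E\bigr)^{\leq 1}$ (equivalently $J^1_{L/A}(E)$ after dualizing) --- and to verify that the two short exact sequences \eqref{eq-alphabis} and \eqref{eq-alpha} are related by the standard ${\rm Hom}$-tensor adjunction. First I would recall that, since $L/A$ is locally free (hence dualizable) as an $R$-module, there is an isomorphism of functors ${\rm Hom}_R(L/A,-)\cong (L/A)^\vee\otimes_R-$ on $R$-modules, which is moreover $A$-equivariant for the natural $A$-actions, and that this induces the adjunction isomorphism displayed in the statement at the level of derived categories. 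So it suffices to match the two extensions up to this adjunction.

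Concretely, I would show that applying ${\rm Hom}_R(-,E)$ to the sequence \eqref{eq-alphabis} --- or rather, first tensoring \eqref{eq-alpha} with suitable duals --- produces \eqref{eq-alpha}, as sequences of $A$-modules. The key computation is to identify $J^1_{L/A}(E)$ with ${\rm Hom}_R\bigl(\bigl(U(L)\oUA\nobreak{\bf 1}_A\bigr)^{\leq 1},E\bigr)$, compatibly with the $A$-actions described in \eqref{eq-AmodU} and \eqref{eq-Amod}. Indeed, by the very definition $J^1_{L/A}(E)$ consists of the $A$-linear maps $U(L)^{\leq 1}\to E$, and $A$-linearity is precisely the condition that such a map factors through $U(A)\backslash U(L)^{\leq 1}=\bigl(U(L)\oUA{\bf 1}_A\bigr)^{\leq 1}$; one then checks that the residual $*$-action of $A$ on $J^1_{L/A}(E)$ computed in \eqref{eq-Amod} corresponds under this identification to the internal-Hom $A$-action dual to the left-multiplication action of \eqref{eq-AmodU}. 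This is a direct unwinding of definitions, using that for $Q\in U(A)$ the operator $P\mapsto [P,Q]$ on $U(L)$ is the transpose (up to sign conventions in the internal Hom) of left multiplication by $Q$ modulo the relevant filtration jump.

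Having made this identification, the sequence \eqref{eq-alpha} is obtained from the sequence \eqref{eq-alphabis} by applying the (exact, since everything is locally free over $R$) functor ${\rm Hom}_R(-,E)$ and using ${\rm Hom}_R(L/A\otimes_R E,E)\cong{\rm Hom}_R\bigl(E,{\rm Hom}_R(L/A,E)\bigr)$. Tracing through Yoneda's description of $\alpha_E$ and $\widetilde\alpha_E$ as the classes of these extensions, and invoking the adjunction isomorphism ${\rm Hom}_{D(A)}\bigl((L/A)\overset{\mathbb L}{\otimes}_R E,E[1]\bigr)\cong{\rm Hom}_{D(A)}\bigl(E,\mathbb R{\rm Hom}_R(L/A,E)[1]\bigr)$, we conclude that the images agree.

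The main obstacle, I expect, is purely bookkeeping: keeping track of the \emph{two} different $R$-module structures and the two commuting module structures lurking in the internal-Hom/bimodule constructions (as flagged in Remark \ref{rem-bimod} and its footnote), and making sure that the adjunction isomorphism ${\rm Hom}_R(L/A\otimes_R E,E)\cong{\rm Hom}_R(E,{\rm Hom}_R(L/A,E))$ is the one compatible with the $A$-actions appearing on both sides, with signs matching the conventions in the de Rham differential. No deep input is needed beyond dualizability of $L/A$ over $R$ and exactness of ${\rm Hom}_R(-,E)$ on locally free modules; the content is entirely in checking the equivariance of the adjunction, which is why the cleanest route is to phrase the whole comparison as a single identification of extension classes rather than to manipulate cocycles.
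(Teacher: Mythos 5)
Your overall strategy---realizing $\alpha_E$ and $\widetilde\alpha_E$ as the two sides of the tensor--hom adjunction applied to a single extension---is indeed the idea behind the paper's proof, but the step you describe as ``a direct unwinding of definitions'' is where the argument breaks, and it breaks for a structural reason. You identify $J^1_{L/A}(E)$ with ${\rm Hom}_R\big(\big(U(L)\oUA{\bf 1}_A\big)^{\leq 1},E\big)$ on the grounds that ``$A$-linearity is precisely the condition that such a map factors through $U(A)\backslash U(L)^{\leq 1}$''. This is false on two counts. First, an $A$-linear map $\phi$ (i.e.\ one satisfying $\phi(QP)=Q\cdot\phi(P)$ as in \eqref{eq-comp2}) does not factor through a quotient of its source unless the $A$-action on $E$ is trivial; you are conflating $A$-linearity with $A$-invariance. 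Indeed ${\rm Hom}_R\big(U(L)\oUA{\bf 1}_A,E\big)$ depends only on the underlying $R$-module of $E$, whereas $J_{L/A}(E)={\rm Hom}_{U(A)}\big(U(L),E\big)$ genuinely depends on the $A$-action, so no natural identification between them can exist. Second, $U(L)\oUA{\bf 1}_A=U(L)/U(L)A$ is the quotient by \emph{right} multiplication by $A$, not $U(A)\backslash U(L)$, while the relation \eqref{eq-comp2} constrains \emph{left} multiplication; your displayed equality mixes the two sides. The correct statement---and the actual content of the paper's appendix---is Lemma \ref{lem:appendix}: $J^1_{L/A}(E)$ is the pullback of ${\rm Hom}_R(L/A,-)$ applied to \eqref{eq-alphabis} \emph{for the same $E$} along the coevaluation $E\to{\rm Hom}_R\big(L/A,(L/A)\otimes_RE\big)$, i.e.\ the kernel of a difference map; establishing this requires the explicit (and not formally obvious) map $\ell(\phi)(l)=l\otimes\phi(1)-1\otimes\phi(l)$ together with a verification of its $A$-linearity, not a factorization argument. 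Relatedly, applying ${\rm Hom}_R(-,E)$ directly to \eqref{eq-alphabis} as you first suggest produces an extension of ${\rm Hom}_R(E,E)$ by ${\rm Hom}_R\big((L/A)\otimes_RE,E\big)$, whose outer terms do not match those of \eqref{eq-alpha} unless $E={\bf 1}_A$.

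Two further points. You assume $L/A$ is dualizable over $R$ and that ${\rm Hom}_R(-,E)$ is exact on the relevant sequences; the Proposition makes no such hypotheses, which is precisely why its conclusion is an equality of images in ${\rm Hom}_{D(A)}$ rather than a correspondence of ${\rm Ext}^1$ groups. Accordingly, even once the pullback description is in place, one must still check that this underived construction computes the image under the derived adjunction map \eqref{eq:ex-app}; the paper does this by replacing $E$ with an injective resolution $\widetilde E$ and rerunning the construction there, a step absent from your proposal. In the special case where $L/A$ is finitely generated projective over $R$ that last step becomes automatic, but the false identification above remains the essential gap in every case.
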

\begin{proof}
See Appendix. 
\end{proof}
In particular this implies that $\alpha_E=0$ if and only if $\widetilde\alpha_E=0$ (which we also prove in Proposition \ref{prop-3.2}). 
This is because the natural maps from ${\rm Ext}^1_A\big((L/A)\otimes_RE,E)$ and ${\rm Ext}^1_A\big(E,{\rm Hom}_R(L/A,E)\big)$  to 
${\rm Hom}_{D(A)}\big((L/A)\overset{\mathbb{L}}{\otimes}_RE,E[1]\big)$ and 
${\rm Hom}_{D(A)}\big(E,\mathbb{R}{\rm Hom}_R(L/A,E)[1]\big)$, respectively, are both injective. 
\begin{rem}\label{rem-Koszul}
It is very likely that $\tilde\alpha_E$ fits into the framework of Atiyah classes associated 
to dDg algebras \cite[Section 8]{CVdB}, but proving it would require to understand deeply the 
Koszul-type duality between $C^\bullet(L)$ and $U(L)$ (see e.g.~\cite{P} in which the two extreme 
cases $L={\rm Der}(R)$ and $R={\bf k}$ are covered). 
\end{rem}

\subsection{The first infinitesimal neighbourhood Lie algebroid $A^{(1)}$}

Being a Lie algebroid over $R$, $L$ is in particular an anchored $R$-module. 
We can therefore consider the free Lie algebroid $FR(L)$ over $R$ generated by $L$. 
Let us then consider the quotient $A^{(1)}$ of $FR(L)$ by the ideal generated 
by\footnote{Observe that, contrary to what is suggested by the notation, $A^{(1)}$ does not only depend on $A$ but also on $L$. } 
$$
[a,l]_{FR(L)}-[a,l]_{L},\quad a\in A,l\in L\,.
$$
Observe that it is a well-defined (Lie algebroid) ideal in $FR(L)$ as the anchor map of $FR(L)$ coincides by definition 
with the one of $L$ on generators. We call $A^{(1)}$ the {\it first infinitesimal neighbourhood} of $A$ (see \cite{CCT}, 
where the geometric motivation behind such a denomination is given). We denote by $j$ the Lie algebroid inclusion of 
$A$ into $A^{(1)}$. We now prove that, for an $A$-module $E$, the classes $\alpha_E$ and $\widetilde\alpha_E$ both 
give the obstruction to lift $E$ to an $A^{(1)}$-module: 
\begin{prop}\label{prop-3.2}
Let $E$ be an $A$-module. Then the following statements are equivalent: 

(1) There exists an $A^{(1)}$-module $E^{(1)}$ such that $j^*\big(E^{(1)}\big)=E$. 

(2) $\alpha_E=0$. 

(3) $\widetilde\alpha_E=0$.
\end{prop}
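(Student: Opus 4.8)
The natural strategy is to prove the chain of implications $(1)\Rightarrow(2)$, $(2)\Leftrightarrow(3)$, and $(2)\Rightarrow(1)$ (or equivalently $(3)\Rightarrow(1)$), using the concrete descriptions of $\alpha_E$, $\widetilde\alpha_E$, and $A^{(1)}$ given above. The equivalence $(2)\Leftrightarrow(3)$ is essentially free: by Proposition \ref{prop-sameclasses} the images of $\alpha_E$ and $\widetilde\alpha_E$ in the relevant $\mathrm{Hom}$-space of $D(A)$ agree, and the two comparison maps from the $\mathrm{Ext}^1$ groups into $D(A)$ are injective, so one class vanishes iff the other does; this is exactly the remark following Proposition \ref{prop-sameclasses}, so I would simply invoke it.

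For $(2)\Rightarrow(1)$, the key observation is what $A^{(1)}$-module structures on $E$ amount to concretely. Since $A^{(1)}=FR(L)/(\text{ideal})$ with $FR(L)$ free on the anchored $R$-module $L$, a flat $A^{(1)}$-connection on $E$ is the same as an $R$-linear map $\nabla:L\to\mathrm{End}_{\bf k}(E)$ satisfying the Leibniz rule against the anchor of $L$ (so that it factors through $FR(L)$, using the defining relations of the free Lie algebroid), together with the single compatibility condition $\nabla_{[a,l]_L}=[\nabla_a,\nabla_l]$ for $a\in A$, $l\in L$ — and with the constraint that $\nabla|_A$ recovers the given $A$-action, i.e.\ $j^*(E^{(1)})=E$. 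In other words, lifting $E$ to $A^{(1)}$ is precisely choosing an $R$-linear lift of the flat $A$-connection on $E$ to a (not necessarily flat) $L$-connection. But the class $\alpha_E$ was defined via the short exact sequence \eqref{eq-alphabis}
$$0\longrightarrow E\longrightarrow \left(U(L)\oUA E\right)^{\leq1}\longrightarrow L/A\otimes_R E\longrightarrow 0,$$
and an $A$-linear splitting of this sequence (equivalently $\alpha_E=0$) is exactly the data of such a lift: a splitting picks out, for each $l\in L$, an operator $\nabla_l$ on $E$ compatible modulo $E$ with $l\otimes(-)$, and the $A$-linearity of the splitting translates — via the computation \eqref{eq-AmodU} — into the relation $\nabla_{[a,l]}=[\nabla_a,\nabla_l]$. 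I would make this dictionary precise and check it respects $R$-linearity and the Leibniz/anchor constraint, concluding that a splitting produces the desired $A^{(1)}$-module $E^{(1)}$.

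Conversely, for $(1)\Rightarrow(2)$, given $E^{(1)}$ with $j^*(E^{(1)})=E$, the $L$-connection $\nabla$ it provides defines an $R$-linear section $L/A\otimes_R E\to (U(L)\oUA E)^{\leq1}$ by $l\otimes e\mapsto l\otimes e-1\otimes\nabla_l(e)$ (well-defined modulo $A$ because $\nabla|_A$ is the $A$-action), and the compatibility $\nabla_{[a,l]}=[\nabla_a,\nabla_l]$ is exactly what makes this section $A$-linear; hence the sequence \eqref{eq-alphabis} splits and $\alpha_E=0$. The main obstacle — really the only subtle point — is bookkeeping: one must carefully verify that the defining relations of $FR(L)$ (the $R$-linearity relation $[m,rm']-[rm,m']=m(r)m'+m'(r)m$) impose no condition on $\nabla$ beyond $R$-linearity plus Leibniz against the anchor, so that the correspondence between lifts to $A^{(1)}$ and $A$-linear splittings of \eqref{eq-alphabis} is genuinely a bijection and not merely a one-sided implication. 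The parallel statement for $\widetilde\alpha_E$ via the sequence \eqref{eq-alpha}, $0\to\mathrm{Hom}_R(L/A,E)\to J^1_{L/A}(E)\to E\to 0$, follows either by the same dictionary applied to the internal-Hom picture or, more cheaply, from $(2)\Leftrightarrow(3)$ once $(1)\Leftrightarrow(2)$ is established.
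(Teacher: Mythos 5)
Your proposal is correct, but it is organized differently from the paper's proof. The paper establishes the cycle $(1)\Rightarrow(2)\Rightarrow(3)\Rightarrow(1)$ by elementary computations in which the jet module $J^1_{L/A}(E)$ is essential: a splitting $s$ of \eqref{eq-alphabis} is converted into a section $\widetilde s(e)(P):=s(P\otimes e)$ of \eqref{eq-alpha}, and a section of \eqref{eq-alpha} is converted into an $A^{(1)}$-action by $le:=s(e)(l)$; in particular the paper does \emph{not} invoke Proposition \ref{prop-sameclasses} here (it explicitly presents Proposition \ref{prop-3.2} as an independent second proof that $\alpha_E=0\iff\widetilde\alpha_E=0$). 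You instead prove $(1)\Leftrightarrow(2)$ directly via the dictionary between $A$-linear splittings of \eqref{eq-alphabis} and lifts of the flat $A$-connection to an $L$-connection $\nabla$ with $\nabla_{[a,l]_L}=[\nabla_a,\nabla_l]$, and outsource $(2)\Leftrightarrow(3)$ to Proposition \ref{prop-sameclasses} plus injectivity of the comparison maps. Both routes are valid and the computations in your $(1)\Leftrightarrow(2)$ are essentially the ones the paper performs for $(1)\Rightarrow(2)$ and (in the jet picture) $(3)\Rightarrow(1)$; your version makes the connection-lifting interpretation of $\alpha_E$ transparent and bypasses jets in the main argument, at the cost of resting $(2)\Leftrightarrow(3)$ on the derived-category comparison whose proof the paper only sketches in the appendix, whereas the paper's cyclic argument is entirely self-contained. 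Two points you should make explicit when writing this up: (i) the fact that an $R$-linear map $\nabla:L\to{\rm End}_{\bf k}(E)$ satisfying the Leibniz rule against the anchor automatically respects the defining relations $[m,rm']-[rm,m']=m(r)m'+m'(r)m$ of $FR(L)$ and hence yields an $FR(L)$-module structure (you flag this, and it is the same implicit step the paper takes in its $(3)\Rightarrow(1)$); and (ii) in the section $\bar l\otimes e\mapsto l\otimes e-1\otimes\nabla_l(e)$ the Leibniz rule is exactly what makes the formula well defined on the tensor product over $R$, not only modulo $A$.
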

From now we omit the symbol $\cdot$ in formul\ae. We also omit parentheses when they are not strictly necessary 
(e.g.~when one can use associativity). 
\begin{proof}
(1)$\implies$(2): assume that such an $A^{(1)}$-module $E^{(1)}$ exists. 
Observe that any $P\in U^{\leq 1}(L)=R\oplus L$ lies in $U(FR(L))$, and thus acts on $E$ through the 
quotient map $U(FR(L))\to U(A^{(1)})$. We therefore set, for any $e\in E$, $s(P\otimes e):=P e$. 
Since $E=j^*\big(E^{(1)}\big)$ then there is no ambiguity in the way $Q\in U(A)$ acts on $E$, so that 
$s(PQ\otimes e)=PQe=s(P\otimes Qe)$. 
Moreover, for the same reason $s$ is $A$-linear:  
$$
Qs(P\otimes e)=QPe=[Q,P]_{A^{(1)}}e+PQe=[Q,P]_Le+PQe=s\big(Q(P\otimes e)\big)\,.
$$
Therefore $s$ is a splitting \eqref{eq-alphabis}. 

(2)$\implies$(3): assume $\alpha_E=0$. Then there exists a splitting 
$s:\left(U(L)\underset{U(A)}\otimes E\right)^{\leq1}\longrightarrow E$ of \eqref{eq-alphabis}. 
We define a splitting $\widetilde{s}:E\longrightarrow J^1_{L/A}E$ of \eqref{eq-alpha} as follows: 
for $P\in U(L)^{\leq1}$ and $e\in E$, $\widetilde{s}(e)(P):=s(P\otimes e)$. We must check two things: 
\begin{itemize}
\item $\widetilde{s}(e)$ belongs to $J_{L/A}(E)$, i.e.~satisfies \eqref{eq-comp2}: if $Q\in U(A)$ then 
$$
\widetilde{s}(e)(QP)=s(QP\otimes e)=Q\big(s(P\otimes e)\big)=Q\big(\widetilde s(e)(P)\big)\,.
$$
\item $\widetilde{s}$ is $A$-linear: if $Q\in U(A)$ then 
$\big(Q*\widetilde{s}(e)\big)(P)=\widetilde{s}(e)(PQ)=s(PQ\otimes e)=s(P\otimes Qe)=\widetilde{s}(Qe)(P)$. 
\end{itemize}

(3)$\implies$(1): finally assume we have a section $s:E\to J_{L/A}(E)$ of \eqref{eq-alpha}. Then for any $l\in L$ and any $e\in E$ 
we define $le:=s(e)(l)$. We first observe that this defines an action of $FR(L)$ on $E$: 
\begin{itemize}
\item on the one hand if $r\in R$, $l\in L$, $e\in E$ then $(rl)e=s(e)(rl)=r\big(s(e)(l)\big)=r(le)$ . 
\item on the other hand if $r\in R$, $l\in L$, $e\in E$ then 
$$
l(re)-r(le)=s(re)(l)-s(e)(rl)=\big(r*s(e)\big)(l)-s(e)(rl)=s(e)(lr-rl)=s(e)\big(l(r)\big)=l(r)e\,.
$$
\end{itemize}
This action restricts to the one of $A\subset FR(L)$: for $a\in A$ and $e\in E$, $s(e)(a)=as(e)(1)=ae$. 
Finally we see that it descends to an action of $A^{(1)}$: for $a\in A$, $l\in L$ and $e\in E$, 
$$
\big([l,a]_L\big)e=s(e)\big([l,a]_L\big)=\big(a*s(e)\big)(l)-a\big(s(e)(l)\big)
=\big(s(ae)\big)(l)-ale=lae-ale=\big([l,a]_{FR(L)}\big)e\,.
$$
In the second equality we have used formula \eqref{eq-Amod} for the $A$-action on $J_{L/A}(E)$ and in the third 
one we have used $A$-linearity of $s$. 
\end{proof}

\section{PBW for the inclusion into the first infinitesimal neighbourhood}\label{sec-4}

In this Section we prove a version of the main Theorem for the inclusion $j:A\hookrightarrow A^{(1)}$. 
The proof we give follows very much and hopefully simplifies the one of Darij Grinberg for Lie algebras (see \cite{G}). 
It is very likely that a proof using some Koszulness property in the spirit of \cite{CCT} might also 
exist\footnote{But it would have required to adapt some standard but quite technical constructions to the context 
of non-central $R$-algebras (see Remark \ref{rem-Koszul}). }. 

\medskip

The goal of the present Section is to understand the $A$-module 
$$
j^*j_!({\bf 1}_A):=U\big(A^{(1)}\big)\oUA {\bf 1}_A=U\big(A^{(1)}\big)/U\big(A^{(1)}\big)A\,.
$$
According to \S~\ref{ss-free} the free Lie algebroid $FR(L)$ and its quotient $A^{(1)}$ admit a filtration. 
Their universal enveloping algebras are therefore filtered too in an obvious way. We denote these filtrations 
by $F^kU\big(FR(L)\big)$ and $F^kU\big(A^{(1)}\big)$ in order to distinguish them from the standard filtrations on 
universal enveloping algebras. 
It is worth noticing that the $A$-module structure on $j^*j_!({\bf 1}_A)$ is compatible with the induced filtration 
$F^k:=F^k\big(j^*j_!({\bf 1}_A)\big)$: for any $a\in A$ and any $P\in F^kU\big(A^{(1)}\big)$, we have
$$
a\Big(P+U\big(A^{(1)}\big)A\Big)=aP+U\big(A^{(1)}\big)A=[a,P]+U\big(A^{(1)}\big)A\subset F^kU\big(A^{(1)}\big)+U\big(A^{(1)}\big)A\,.
$$

According to \S~\ref{ss-free} the associated graded algebra of the filtered $R$-algebra $U\big(FR(L)\big)$ is the tensor 
$R$-algebra $T_R(L)$. The filtered $R$-linear surjection $\xi:U\big(FR(L)\big)\to j^*j_!({\bf 1}_A)$ therefore 
induces a graded $R$-linear map ${\rm gr}(\xi):T_R(L)\to {\rm gr}\big(j^*j_!({\bf 1}_A)\big)$. We shall also use 
the graded $R$-algebra surjection $\pi:T_R(L)\to T_R(L/A)$. 
\begin{thm}\label{thm-important}
The class $\alpha=\alpha_{L/A}$ vanishes if and only if there exists an isomorphism of filtered $A$-modules 
$\varphi:j^*j_!({\bf 1}_A)\longrightarrow T_R(L/A)$ such that ${\rm gr}(\varphi\circ\xi)=\pi$. Moreover, when this 
happens one can choose $\varphi$ so that it is $A^{(1)}$-linear. 
\end{thm}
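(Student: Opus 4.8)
The plan is to work with the free Lie algebroid $FR(L)$ and its enveloping algebra $U\big(FR(L)\big)$, whose associated graded is $T_R(L)$, and to reduce everything to the structure of the one-step quotient. First I would note that both implications of the equivalence can be handled through Proposition \ref{prop-3.2}: the vanishing of $\alpha=\alpha_{L/A}$ is equivalent to the existence of an $A^{(1)}$-module $(L/A)^{(1)}$ restricting to the given $A$-module $L/A$. The strategy is therefore to show directly that such a lift of $L/A$ produces the desired isomorphism $\varphi$, and conversely that the isomorphism forces $\alpha=0$.

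For the ``only if'' direction, I would proceed as follows. Given an $A^{(1)}$-module structure on $N:=L/A$ lifting the canonical $A$-action, the universal property of $U$ (as the left adjoint described in Remark \ref{rem-2.2}, together with the adjunction $FR\dashv U \dashv Prim$ from \S~\ref{ss-free}) lets me build an algebra map from $U\big(FR(L)\big)$ into an appropriate anchored algebra acting on $T_R(N)$; concretely, I want to define the action of a generator $l\in L$ on $T_R(N)$ as ``multiplication by the class $\bar l\in N$ plus the $A^{(1)}$-action derivation'', mimicking the Lie-algebra construction of Grinberg. The key point is that the defining relations of $A^{(1)}$ — namely $[a,l]_{FR(L)}=[a,l]_L$ for $a\in A$, $l\in L$ — are exactly what is needed for this assignment to descend to $U\big(A^{(1)}\big)$, because on $T_R(N)$ the bracket $[a,l]_L$ acts by the $A$-module structure on $N$ (extended as a derivation via the coproduct), and this matches the commutator of the two operators by the compatibility of the $A^{(1)}$-action with the $A$-action. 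Once I have an $A^{(1)}$-linear (hence $A$-linear) map $U\big(A^{(1)}\big)\to T_R(N)$, evaluating at $1$ kills $U\big(A^{(1)}\big)A$ (since $A$ acts on $1\in T_R^0(N)=R$ through the anchor, landing in degree $0$... more precisely $a\cdot 1 = $ anchor term, which I must check is consistent), giving $\varphi: j^*j_!({\bf 1}_A)\to T_R(N)$; the condition ${\rm gr}(\varphi\circ\xi)=\pi$ holds by construction on generators, and a filtered-degree induction using that ${\rm gr}\,U\big(FR(L)\big)=T_R(L)$ surjects onto ${\rm gr}\big(j^*j_!({\bf 1}_A)\big)$ shows $\varphi$ is a filtered isomorphism. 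The inverse map is built by sending the class of $\bar l_1\cdots\bar l_n$ to $\xi(l_1\cdots l_n)$ and checking well-definedness against the relations — here one uses that the relations of $T_R(N)$ modulo those of $T_R(L)$ are generated by $A$, which is annihilated in $j^*j_!({\bf 1}_A)$.

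For the ``if'' direction, suppose $\varphi:j^*j_!({\bf 1}_A)\to T_R(L/A)$ is a filtered $A$-module isomorphism with ${\rm gr}(\varphi\circ\xi)=\pi$. Restricting to the filtration piece $F^1$, the short exact sequence $0\to R\to F^1 j^*j_!({\bf 1}_A)\to L/A\to 0$ of $A$-modules is carried by $\varphi$ to the split sequence $0\to R\to R\oplus(L/A)\to L/A\to 0$, so it splits as $A$-modules; but I should identify $F^1 j^*j_!({\bf 1}_A)$, or rather a twist of it, with the middle term of \eqref{eq-alphabis} (or its degree-$1$ jet dual \eqref{eq-alpha}), so that the splitting yields $\alpha=0$ directly, or invoke Proposition \ref{prop-3.2} after producing the lift of $L/A$ to $A^{(1)}$ from the higher filtration data. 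Actually the cleanest route is: the isomorphism $\varphi$ is in particular an isomorphism of $A^{(1)}$-modules onto $T_R(L/A)$ equipped with the transported $A^{(1)}$-action, which restricts on the degree-$1$ summand $L/A\subset T_R(L/A)$ (this summand being $A^{(1)}$-stable once we know the action is filtered and ${\rm gr}$ is $\pi$) to an $A^{(1)}$-module structure lifting the $A$-action; then Proposition \ref{prop-3.2} gives $\alpha_{L/A}=0$.

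Finally, for the ``moreover'' clause: in the ``only if'' construction $\varphi$ arises as $\mathrm{ev}_1$ applied to an $A^{(1)}$-linear map, so it is automatically $A^{(1)}$-linear once we check that the target $T_R(L/A)$ carries a compatible $A^{(1)}$-action — which it does, being a tensor power of the lifted module $(L/A)^{(1)}$ with the $U\big(A^{(1)}\big)$-coproduct action. The main obstacle I anticipate is the bookkeeping in verifying that the would-be $U\big(FR(L)\big)$-action on $T_R(L/A)$ respects the ideal defining $A^{(1)}$: one must carefully track the interplay between the anchor-twisted non-centrality of $R$ (the bracket relations $[m,rm']-[rm,m']=m(r)m'+m'(r)m$ in $FR(L)$ from \S~\ref{ss-free}), the coproduct formula governing the action on tensor products, and the fact that $[a,l]_L$ as an operator on $T_R(L/A)$ must equal the commutator $[\,\widehat a,\widehat l\,]$ of the constructed operators — this is precisely where the hypothesis that $L/A$ lifts to $A^{(1)}$ (equivalently $\alpha=0$) enters, and where the analogy with Grinberg's argument \cite{G} must be adapted to the algebroid setting.
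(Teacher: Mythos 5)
Your ``only if'' half is essentially the paper's own argument: assuming $\alpha=0$ you lift the $A$-action on $L/A$ to $A^{(1)}$ (Proposition \ref{prop-3.2}), define $l\bullet P=l\cdot P+\bar l\otimes P$ on $T_R(L/A)$, check via the defining relations $[a,l]_{FR(L)}=[a,l]_L$ that this descends to an $A^{(1)}$-action restricting to the given $A$-action (Lemma \ref{lem-4.4}), and set $\varphi(P)=P\bullet 1$. Your treatment of injectivity is the right idea but stated imprecisely: only the \emph{left} ideal generated by $A$ is annihilated in $j^*j_!({\bf 1}_A)$; to see that the full two-sided ideal $\langle A\rangle\subset T_R(L)$ dies in ${\rm gr}\big(j^*j_!({\bf 1}_A)\big)$ one needs the extra observation that $al-la$ is congruent, modulo $\ker\xi$, to $[a,l]_L$, which has strictly smaller filtration degree (this is Lemma \ref{lem-useful}); then ${\rm gr}(\varphi)$ is sandwiched between two surjections whose composite is the isomorphism $T_R(L)/\langle A\rangle\cong T_R(L/A)$.

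The genuine gap is in the ``if'' direction. Your ``cleanest route'' transports the $A^{(1)}$-action along $\varphi$ and asserts that the degree-one summand $L/A\subset T_R(L/A)$ is $A^{(1)}$-stable ``once we know the action is filtered''. It is not: left multiplication by $l\in L\subset A^{(1)}$ sends $F^k$ into $F^{k+1}$ (equivalently, the term $\bar l\otimes P$ in the $\bullet$-action raises the degree), so the $A^{(1)}$-action on $j^*j_!({\bf 1}_A)$ does \emph{not} preserve the filtration. Only the $A$-action does, precisely because $aP=[a,P]+Pa$ and $Pa$ dies in the quotient --- this is the whole reason $L/A$ is an $A$-module but not automatically an $A^{(1)}$-module, so no lift can be read off the degree-one piece. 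Your first attempt is also off target: splitting $0\to R\to F^1\to L/A\to 0$ concerns a class in ${\rm Ext}^1_A(L/A,R)$, whereas $\alpha$ lives in ${\rm Ext}^1_A\big((L/A)\otimes_R(L/A),L/A\big)$. The missing ingredient is the right-multiplication map $\psi:U\big(A^{(1)}\big)\otimes_{U(A)}(L/A)\to U^+\big(A^{(1)}\big)/U^+\big(A^{(1)}\big)A$, $P\otimes\bar l\mapsto Pl$, a $1$-filtered $A$-module morphism. It induces a morphism of extensions, with identity on the sub- and quotient objects in the relevant degrees, from $0\to L/A\to F^1\big(U(A^{(1)})\otimes_{U(A)}(L/A)\big)\to(L/A)\otimes_R(L/A)\to 0$ --- whose class is $\alpha_{L/A}$ by the identification $F^1U\big(A^{(1)}\big)\cong U(L)^{\leq1}$ and the sequence \eqref{eq-alphabis} --- to $0\to F^1/F^0\to F^2/F^0\to F^2/F^1\to 0$. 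A filtered $A$-module isomorphism onto the graded module $T_R(L/A)$ splits the filtration, hence splits the latter sequence, hence the former, giving $\alpha=0$ (this is Proposition \ref{prop-4.3} via diagrams \eqref{eq-diag1} and \eqref{eq-diag2}). Without this comparison of extensions the ``if'' direction does not close.
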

We devote the rest of this Section to the proof of Theorem \ref{thm-important}. 

\subsection{Yet another look at the extension class $\alpha$}\label{sec-4.1}

The sequence of filtered Lie algebroids morphism\footnote{Here the filtration put on $A$, resp.~$L$, is the constant one, 
so that the induced filtration on its universal enveloping algebra is the standard one. } 
$A\longrightarrow A^{(1)}\longrightarrow L$ provides us with a morphism of filtered $U(A)$-algebras 
$U\big(A^{(1)}\big)\longrightarrow U(L)$. It turns out to restrict to an isomorphism (of $R$-modules) 
$F^1U\big(A^{(1)}\big)\tilde\longrightarrow U(L)^{\leq1}$ between the first filtered pieces. 

Therefore we get the following isomorphism of extensions (meaning that the diagram commutes and the lines 
are exact sequences), for any $A$-module $E$: 
\begin{equation}\label{eq-diag1}
\vcenter{\xymatrix{
0\ar[r] & E\ar[r] & F^1\Big(U\big(A^{(1)}\big)\oUA E\Big)\ar[r]\ar[d] & L/A\otimes_RE\ar[r] & 0 \\
0\ar[r] & E\ar[r]\ar@{=}[u] & \Big(U(L)\oUA E\Big)^{\leq1}\ar[r] & L/A\otimes_RE\ar[r]\ar@{=}[u] & 0 }}
\end{equation}

Let us now restrict our attention to the case when $E=L/A$. We have a $1$-filtered $A$-module morphism 
$\psi:U\big(A^{(1)}\big)\oUA (L/A)\longrightarrow U^+\big(A^{(1)}\big)/U^+\big(A^{(1)}\big)A$, where 
$U^+\big(A^{(1)}\big)=\ker(\epsilon)\cong U\big(A^{(1)}\big)/R$ is equipped with the induced filtration. 
Notice that $U^+\big(A^{(1)}\big)/U^+\big(A^{(1)}\big)A\cong j^*j_!({\bf 1}_A)/F^0$. 
We also have an identification $F^1/F^0=L/A$ so that the following diagram of $A$-modules commutes 
(again, here lines are exact): 
\begin{equation}\label{eq-diag2}
\vcenter{\xymatrix{
0\ar[r] & L/A\ar[r] & F^1\Big(U\big(A^{(1)}\big)\oUA (L/A)\Big)\ar[r]\ar[d]_{\Psi} 
& (L/A)\otimes_R(L/A)\ar[r]\ar[d] & 0 \\
0\ar[r] & L/A\ar[r]\ar@{=}[u] & F^2/F^0\ar[r] & F^2/F^1\ar[r] & 0 }}
\end{equation}
We can now prove the ``if'' part of Theorem \ref{thm-important}. 
\begin{prop}\label{prop-4.3}
If the filtration of $j^*j_!({\bf 1}_A)$ splits, then $\alpha=0$. 
\end{prop}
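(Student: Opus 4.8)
The plan is to extract the extension class $\alpha = \alpha_{L/A}$ directly from the hypothesis. Suppose the filtration of $j^*j_!(\mathbf{1}_A)$ splits, i.e.\ there is a filtered isomorphism $j^*j_!(\mathbf{1}_A) \cong \mathrm{gr}\big(j^*j_!(\mathbf{1}_A)\big)$ of $A$-modules; in particular the subquotient sequence $0 \to F^1/F^0 \to F^2/F^0 \to F^2/F^1 \to 0$ splits as $A$-modules. By the identifications recalled just before the statement — $F^1/F^0 = L/A$, and the map $\Psi$ in diagram \eqref{eq-diag2} together with the fact that $F^2/F^1$ receives a surjection from $(L/A)\otimes_R(L/A)$ — this bottom row is the pushforward along $\Psi$ of the top row of \eqref{eq-diag2}, and the top row of \eqref{eq-diag2} is, via the isomorphism of extensions \eqref{eq-diag1} applied with $E = L/A$, nothing but the defining sequence \eqref{eq-alphabis} for $\alpha_{L/A}$. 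So a splitting downstairs, pulled back through these commuting squares, produces a splitting of \eqref{eq-alphabis}, hence $\alpha = 0$.

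Concretely, the key steps, in order, are: (i) record that a filtered splitting of $j^*j_!(\mathbf{1}_A)$ yields in particular an $A$-linear splitting $\sigma$ of $0 \to F^1/F^0 \to F^2/F^0 \to F^2/F^1 \to 0$; (ii) use diagram \eqref{eq-diag2} to transport $\sigma$ to an $A$-linear splitting of its top row $0 \to L/A \to F^1\big(U(A^{(1)}) \oUA (L/A)\big) \to (L/A)\otimes_R(L/A) \to 0$ — here one must check that the right-hand vertical map $(L/A)\otimes_R(L/A) \to F^2/F^1$ in \eqref{eq-diag2} is an isomorphism (or at least that the relevant diagram chase goes through), which follows from the computation of $\mathrm{gr}$ of the enveloping algebra of $FR(L)$ as $T_R(L)$ and the description of $A^{(1)}$ as a quotient of $FR(L)$; (iii) invoke the isomorphism of extensions \eqref{eq-diag1} with $E = L/A$ to identify that top row with the sequence \eqref{eq-alphabis} defining $\alpha_{L/A}$; (iv) conclude $\alpha = \alpha_{L/A} = 0$.

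I expect the main obstacle to be step (ii): verifying that the square in \eqref{eq-diag2} genuinely lets one lift an $A$-linear splitting of the bottom row to one of the top row. This requires knowing that $\Psi\colon F^1\big(U(A^{(1)}) \oUA (L/A)\big) \to F^2/F^0$ is an isomorphism onto its image in a way compatible with both the sub and the quotient — equivalently, that the right vertical map $(L/A)^{\otimes 2} \to F^2/F^1$ is an isomorphism of $A$-modules. For that one needs the associated-graded identification ${\rm gr}\,U(A^{(1)})$ in degrees $\le 2$, which in turn rests on the free Lie algebroid computation from \S\ref{ss-free}: $\mathrm{gr}\,U(FR(L)) = T_R(L)$, and the ideal cutting out $A^{(1)}$ is generated in filtration degree $\le 1$, so it does not disturb the degree-$2$ graded piece beyond the expected relations. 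Once that is in hand, the splitting transports formally by pulling back a section along the commuting squares, and the identification with \eqref{eq-alphabis} via \eqref{eq-diag1} is immediate. Everything else is diagram chasing and uses no new ingredient beyond what is already set up in \S\ref{sec-4.1}.
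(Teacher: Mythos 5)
Your proposal follows the same route as the paper's proof: a filtered splitting of $j^*j_!({\bf 1}_A)$ splits the bottom row $0\to F^1/F^0\to F^2/F^0\to F^2/F^1\to 0$ of \eqref{eq-diag2}, the splitting is transported to the top row, and the top row is identified with the defining sequence \eqref{eq-alphabis} of $\alpha_{L/A}$ via \eqref{eq-diag1}. The one point to correct is your step (ii): you assert that the transport ``requires'' the right-hand vertical arrow $(L/A)\otimes_R(L/A)\to F^2/F^1$ of \eqref{eq-diag2} to be an isomorphism, and you propose to deduce this from ${\rm gr}\,U\big(FR(L)\big)=T_R(L)$ and the presentation of $A^{(1)}$. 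That isomorphism is neither available nor needed here. It is not available because it is precisely Property $(\star$'$)$ in degree $2$, which the paper establishes only \emph{after} assuming $\alpha=0$ (via the construction of $\varphi$ in \S\ref{sec-4.2}); before Proposition \ref{prop-4.3} one only knows surjectivity, and the two-sided ideal generated by the relations $[a,l]_{FR(L)}-[a,l]_L$ is not obviously harmless in graded degree $2$ --- controlling it is exactly the PBW-type content of the section, so invoking it would be circular. It is not needed because in any morphism of short exact sequences whose left vertical arrow is the identity, the top row is the pullback of the bottom row along the right vertical arrow (five lemma applied to the comparison map into the fibre product), and the pullback of a split extension splits: if $s$ splits the bottom row and $f$ is the right vertical map, then $c\mapsto\big(s(f(c)),c\big)$ splits the top. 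This is what your parenthetical ``the relevant diagram chase goes through'' amounts to, and it is the (implicit) content of the paper's one-line argument. With step (ii) justified this way, your steps (i), (iii) and (iv) reproduce the paper's proof exactly.
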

\begin{proof}
If the filtration of $j^*j_!({\bf 1}_A)$ splits, then the bottom exact sequence in the diagram \eqref{eq-diag2} splits 
and therefore so does the top exact sequence in the same diagram. It follows from the commutativity of 
\eqref{eq-diag1} that the class of this exact sequence is $\alpha=\alpha_{L/A}$. 
\end{proof}

\subsection{A filtered morphism $j^*j_!({\bf 1}_A)\longrightarrow T_R(L/A)$}\label{sec-4.2}

We now assume that $\alpha=0$, which means that the $A$-action on $L/A$ can be lifted to an $A^{(1)}$-action. 
We therefore obtain a graded $A^{(1)}$-module structure on $T_R(L/A)$. We use the notation $\cdot$ for this action. 
For any $l\in L$ and any $P\in T_R(L/A)$ we now define $l\bullet P:=l\cdot P+\bar{l}\otimes P$, $\bar{l}$ 
being the class of $l$ in $L/A$. 
\begin{lem}\label{lem-4.4}
The operation $\bullet$ defines a filtered $FR(L)$-module structure on $T_R(L/A)$ such that: \\
\indent (i) It actually is an $A^{(1)}$-module structure. \\
\indent (ii) Its restriction to $A$ is the original $A$-module structure on $T_R(L/A)$. 
\end{lem}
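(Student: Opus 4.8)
The plan is to verify, one relation at a time, that the formula $l\bullet P:=l\cdot P+\bar l\otimes P$ satisfies the defining relations of $FR(L)$ acting on $T_R(L/A)$, then check properties (i) and (ii) by inspecting what the relations imposed to pass from $FR(L)$ to $A^{(1)}$ do to the operation $\bullet$. First I would recall that giving an $FR(L)$-action on $T_R(L/A)$ is the same, by the adjunction of \S~\ref{ss-free}, as giving a morphism of anchored $R$-modules $L\to \mathrm{Der}(R)\ltimes\mathrm{End}_{\bf k}\big(T_R(L/A)\big)$, or more concretely: an ${\bf k}$-linear map $l\mapsto(l\bullet-)$ from $L$ to $\mathrm{End}_{\bf k}\big(T_R(L/A)\big)$ together with an $R$-module structure compatibility of the Rinehart type. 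Since the target $FR(L)$-module must be anchored by the anchor of $L$ (which acts on $R$, hence on $T_R(L/A)$ by derivations of the algebra structure extending the $\mathcal A$-connection on $L/A$), the only thing to check is the single Rinehart relation $[\,l, rm\,]\bullet - = \big([l,m]+l(r)\big)\bullet -$ modified as in the definition of $FR(L)$; equivalently, that $l\bullet(rP)-r(l\bullet P)=l(r)P$ for $r\in R$, $l\in L$, $P\in T_R(L/A)$, and that the map respects $r$-linearity in $l$ up to the anchor correction. Both reduce to the fact that the $A^{(1)}$-action $\cdot$ on $T_R(L/A)$ is itself $R$-semilinear via the anchor (it is an $A^{(1)}$-module, and the anchor of $A^{(1)}$ on generators $L$ is that of $L$), plus the obvious identity $\bar l\otimes(rP)=r(\bar l\otimes P)$ in $T_R(L/A)$ which holds on the nose because $R$ need not be central but the tensor algebra is built over $R$ on the left. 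So the verification that $\bullet$ is a filtered $FR(L)$-module is a short semilinearity computation; the filtration statement is immediate since $l\cdot$ is $0$-filtered for the constant filtration on $L$ and $\bar l\otimes-$ raises degree by exactly $1$, matching the filtration degree of $l\in FR(L)$.

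For part (ii) the restriction to $A\subset FR(L)$ is handled by observing that for $a\in A$ we have $\bar a=0$ in $L/A$, so $a\bullet P=a\cdot P+0=a\cdot P$, which by construction is the original $A$-action on $T_R(L/A)$ (the one coming from the chosen lift, restricted back along $A\hookrightarrow A^{(1)}$, is the $A$-action we started with, since the lift was a lift of \emph{that} action). Thus (ii) is essentially a tautology once the definitions are unwound, and I would dispatch it in one line.

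The genuine content is part (i): one must show that the $FR(L)$-action $\bullet$ factors through $A^{(1)}=FR(L)/(\,[a,l]_{FR(L)}-[a,l]_L\,)$, i.e.\ that for all $a\in A$, $l\in L$ the two operators $[a,l]_{FR(L)}\bullet-$ and $[a,l]_L\bullet-$ agree on $T_R(L/A)$. Expanding the left side using that $\bullet$ is a Lie action, $[a,l]_{FR(L)}\bullet P=a\bullet(l\bullet P)-l\bullet(a\bullet P)$; then substitute the definitions $a\bullet Q=a\cdot Q$ and $l\bullet Q=l\cdot Q+\bar l\otimes Q$, and use (A) that $\cdot$ is an honest Lie action of $A^{(1)}$, hence $a\cdot(l\cdot P)-l\cdot(a\cdot P)=[a,l]_L\cdot P$ (note $[a,l]_L\in A$, acting via $\cdot$), and (B) that $a\cdot(\bar l\otimes P)=\overline{[a,l]}\otimes P+\bar l\otimes(a\cdot P)$, which is exactly the statement that $\cdot$ acts on $T_R(L/A)$ as a tensor-algebra derivation compatible with the $\mathcal A$-action on the generator $L/A$ (and here $\overline{[a,l]_L}=a\cdot\bar l$ is the definition of the $A$-action on $L/A$ from \S~\ref{sec-L/A}). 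Collecting terms, the $l\cdot,a\cdot$ double-bracket piece produces $[a,l]_L\cdot P$ and the remaining $\otimes$-terms collapse to $\overline{[a,l]_L}\otimes P$, so the total is $[a,l]_L\cdot P+\overline{[a,l]_L}\otimes P=[a,l]_L\bullet P$, as required. The main obstacle — really the only place one can slip — is keeping the signs and the $R$-semilinearity straight in step (B), i.e.\ making sure the Leibniz rule for $a\cdot$ on the degree-one generator and on a general tensor $P$ is invoked correctly and that $\overline{[a,l]_L}$ is identified with $a\cdot\bar l$ using precisely the $A$-module structure of \S~\ref{sec-L/A} rather than some twisted version; once that bookkeeping is pinned down, the factorization through $A^{(1)}$ is forced.
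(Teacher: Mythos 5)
Your proposal is correct and follows essentially the same route as the paper's proof: the same semilinearity check for the $FR(L)$-action, the same observation that $\bar a=0$ gives (ii), and for (i) the same expansion of $[a,l]_{FR(L)}\bullet P$ via the Leibniz rule for $a\cdot$ on $\bar l\otimes P$ together with the defining relation $[a,l]_{A^{(1)}}=[a,l]_L$ of $A^{(1)}$. The only (harmless) slip is the parenthetical claim that $[a,l]_L\in A$ --- in general it is just an element of $L$ --- but since you correctly retain the term $\overline{[a,l]_L}\otimes P$ this does not affect the argument.
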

\begin{proof}
First of all let us prove that $\bullet$ determines an $FR(L)$-action on $T_R(L/A)$: for $r\in R$ and $l\in L$ 
we obviously have $(rl)\bullet-=r(l\bullet-)$, and for any $P\in T_R(L/A)$, 
$$
r(l\bullet P)-l\bullet(rP)=rl\cdot P+r\bar{l}\otimes P-l\cdot(rP)-\bar{l}\otimes rP=rl\cdot P-l\cdot(rP)=l(r)P\,.
$$

We now prove that it turns out to be an $A^{(1)}$-action: for $a\in A$, $l\in L$ and $P\in T_R(L/A)$, 
\begin{eqnarray*}
[a,l]_{FR(L)}\bullet P & = & a\bullet(l\bullet P)-l\bullet(a\bullet P)
  =   [a,l]_{A^{(1)}}\cdot P+a\cdot(\bar{l}\otimes P)-\bar{l}\otimes(a\cdot P) \\
& = & [a,l]_L\cdot P+(a\cdot\bar{l})\otimes P=[a,l]_L\cdot P+\overline{[a,l]_L}\otimes P
  =   [a,l]_L\bullet P\,.
\end{eqnarray*}

Finally, the second property is obvious. 
\end{proof}
We obtain from the above lemma a filtered morphism of $A^{(1)}$-modules 
$$
\varphi:U\big(A^{(1)}\big)/U\big(A^{(1)}\big)A\longrightarrow T_R(L/A)\,,\quad P\longmapsto P\bullet 1\,.
$$
It is clear from the construction of $\varphi$ that ${\rm gr}(\varphi\circ\xi)=\pi$, which is surjective. 
Therefore ${\rm gr}(\varphi)$ is surjective, and thus $\varphi$ is surjective (because 
filtrations under consideration are exhaustive). 

\medskip

We will now prove that $\varphi$ is an isomorphism. To do so we will prove that ${\rm gr}(\varphi)$ is an isomorphism. 
We start with the following: 
\begin{lem}\label{lem-useful}
The two-sided ideal $\langle A\rangle$ generated by $A$ in $T_R(L)$ sits inside the kernel of ${\rm gr}(\xi)$. 
\end{lem}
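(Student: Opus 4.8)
The plan is to show that every generator of $\langle A\rangle$, namely an element of the form $P\otimes a\otimes Q$ with $a\in A$ and $P,Q\in T_R(L)$, maps to zero under $\mathrm{gr}(\xi)$. By $R$-linearity and because $\mathrm{gr}(\xi)$ is a map of graded $R$-modules, it suffices to treat monomials $P=l_1\otimes\cdots\otimes l_p$ and $Q=l_1'\otimes\cdots\otimes l_q'$ with all $l_i,l_j'\in L$. Here $P\otimes a\otimes Q$ lives in degree $p+q+1$ of $T_R(L)=\mathrm{gr}\,U(FR(L))$, and it is represented by any lift in $F^{p+q+1}U(FR(L))$, for instance the product (taken in $U(FR(L))$) of the generators $l_1\cdots l_p\,a\,l_1'\cdots l_q'$. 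Applying $\xi$ amounts to acting by this element on $1\in j^*j_!(\mathbf 1_A)=U(A^{(1)})/U(A^{(1)})A$.

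The key observation is that $a\in A$ acts on $1$ by $0$, since $a\cdot 1=a+U(A^{(1)})A=0$ in the quotient $U(A^{(1)})/U(A^{(1)})A$. So I would argue by moving the $a$ to the right, past $l_1'\cdots l_q'$, using the commutation relations in $U(A^{(1)})$ (equivalently, in $U(FR(L))$ modulo the defining ideal of $A^{(1)}$): each time $a$ is swapped past some $l_j'$ we produce a correction term $[a,l_j']_{A^{(1)}}=[a,l_j']_L\in L\subset U(A^{(1)})$, which raises the $A$-depth but keeps us in the same total degree $p+q+1$, while the ``straightened'' term $l_1\cdots l_p\,l_1'\cdots l_q'\,a$ acts as $0$ on $1$. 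Iterating and keeping careful track of the filtration degrees — crucially the bracket $[a,l_j']$ still sits in filtration degree $1$ inside $FR(L)$, so commuting does not raise the $F$-filtration degree — one sees that $\xi(l_1\cdots l_p\,a\,l_1'\cdots l_q')$ is a sum of terms each of which, after finitely many such swaps, ends in an element of $A$ acting on $1$, hence lies in a strictly lower piece $F^{\leq p+q}$ of the induced filtration. Passing to the associated graded, this says exactly that $\mathrm{gr}(\xi)(P\otimes a\otimes Q)=0$.

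Alternatively, and perhaps more cleanly, I would phrase this via the surjection $\xi$ factoring: the element $P\otimes a\otimes Q$ is the image under $\mathrm{gr}$ of $PaQ\in F^{p+q+1}$, and since $U(A^{(1)})A$ is a left $U(A^{(1)})$-submodule, the coset $PaQ+U(A^{(1)})A$ equals $P\big(aQ\big)+U(A^{(1)})A$, and $aQ\equiv [a,Q']+Q'a\pmod{}$ where the straightening $Q'a\in U(A^{(1)})A$ kills one factor, leaving only terms with a bracket $[a,l_j']$ of strictly smaller length in the relevant grading, hence zero in the associated graded in this degree. One should also note the base case $p=q=0$: then $\xi(a)=a\cdot 1=0$ outright, so the claim holds in degree $1$, and the induction on $q$ (the number of $L$-factors to the right of $a$) drives the general case.

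The main obstacle, and the only thing requiring care, is the bookkeeping of the two filtrations simultaneously: the standard one and the $F$-filtration coming from the free Lie algebroid. One must check that commuting $a$ past an $l_j'$ does not increase the $F$-degree — which holds because $[a,l_j']_L$ lies in $L$ and therefore in $F^1$ — and that the straightened remainder genuinely lands in $U(A^{(1)})A$, which is immediate from $a$ ending up on the far right. Once these two points are confirmed, the statement follows by a straightforward induction; I would present the argument by induction on $q$, with the degree-$1$ computation $\xi(a)=0$ as the base of the whole scheme.
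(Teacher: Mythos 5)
Your argument is correct and is essentially the paper's own proof: the paper notes that $Pa\in\ker(\xi)$ and that $P(al-la)Q\in\ker(\xi)+F^{k+l+1}U\big(FR(L)\big)$ (because $[a,l]_{FR(L)}\equiv[a,l]_L\in F^1$ modulo the defining ideal of $A^{(1)}$), and then uses that the left ideal generated by $A$ together with the commutators $a\otimes l-l\otimes a$ generate the two-sided ideal $\langle A\rangle$ --- which is exactly the content of your explicit straightening of $a$ to the right. The only cosmetic difference is that the paper records the degree drop once for each of the two generating families instead of running an induction on $q$.
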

\begin{proof}
Recall that the kernel of $\xi:U\big(FR(L)\big)\longrightarrow U\big(A^{(1)}\big)/U\big(A^{(1)}\big)A=j^*j_!({\bf 1}_A)$ 
is the sum of the two-sided ideal generated by $[a,l]_{FR(L)}-[a,l]_L$ ($a\in A$, $l\in L$) and of the left ideal generated 
by $A$. In particular, for $a\in A$, $l\in L$, $P\in F^kU\big(FR(L)\big)$ and $Q\in F^lU\big(FR(L)\big)$, we have: 
$$
P(al-la)Q\in\ker(\xi)+F^{k+l+1}U\big(FR(L)\big)\quad\textrm{and}\quad Pa\in\ker(\xi)\,.
$$
Therefore the two-sided ideal generated by $a\otimes l-l\otimes a$ ($a\in A$, $l\in L$) in $T_R(L)$ sits 
inside $\ker\big({\rm gr}(\xi)\big)$, as well as does the left ideal generated by $A$. Together they generate 
the two-sided ideal generated by $A$ inside $T_R(L)$. 
\end{proof}
It follows from the above lemma that ${\rm gr}(\xi)$ induces a surjective map 
$T_R(L)/\langle A\rangle\longrightarrow {\rm gr}\big(j^*j_!({\bf 1}_A)\big)$ such that, when composed with the 
surjective map ${\rm gr}(\varphi):{\rm gr}\big(j^*j_!({\bf 1}_A)\big)\longrightarrow T_R(L/A)$, 
it leads to the isomorphism $T_R(L)/\langle A\rangle\longrightarrow T_R(L/A)$. 
In particular, ${\rm gr}(\varphi)$ is an isomorphism. 

\section{PBW for an inclusion of Lie algebroids}\label{sec-5}

The goal of the present Section is to understand the $A$-module 
$$
i^*i_!({\bf 1}_A):=U(L)\oUA {\bf 1}_A=U(L)/U(L)A\,.
$$
According to \S~\ref{sec-alpha} $i^*i_!({\bf 1}_A)$ turns out to be a filtered $A$-module. 
We write $G^k:=(i^*i_!({\bf 1}_A)\big)^{\leq k}$, and borrow the notation from the previous Section. 

\subsection{What if the filtration of $i^*i_!({\bf 1}_A)$ splits?}\label{sec-5.1}

\begin{prop}\label{prop-if}
If the filtration of $i^*i_!({\bf 1}_A)$ splits, then $\alpha=0$. 
\end{prop}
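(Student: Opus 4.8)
The strategy is to reduce Proposition \ref{prop-if} to its counterpart for the first infinitesimal neighbourhood, namely Proposition \ref{prop-4.3}. The natural tool is the sequence of filtered Lie algebroid morphisms $A\longrightarrow A^{(1)}\longrightarrow L$, which induces filtered $U(A)$-algebra maps $U(A^{(1)})\longrightarrow U(L)$ and hence a filtered morphism of $A$-modules $j^*j_!({\bf 1}_A)=U(A^{(1)})/U(A^{(1)})A\longrightarrow U(L)/U(L)A=i^*i_!({\bf 1}_A)$. As recalled in \S~\ref{sec-4.1} this map restricts to an isomorphism on the first filtered pieces $F^1\tilde\longrightarrow G^1$, and under the identifications $F^1/F^0\cong L/A\cong G^1/G^0$ and $F^2/F^0$, $G^2/G^0$ it fits into a commuting ladder of $A$-module extensions analogous to \eqref{eq-diag2}. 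Concretely, I would write down the diagram
\begin{equation*}
\vcenter{\xymatrix{
0\ar[r] & L/A\ar[r]\ar@{=}[d] & F^2/F^0\ar[r]\ar[d] & F^2/F^1\ar[r]\ar[d] & 0 \\
0\ar[r] & L/A\ar[r] & G^2/G^0\ar[r] & G^2/G^1\ar[r] & 0 }}
\end{equation*}
whose top row, by the analysis in \S~\ref{sec-4.1} (the commutativity of \eqref{eq-diag1} together with \eqref{eq-diag2}), represents the class $\alpha=\alpha_{L/A}$ in ${\rm Ext}^1_A\big((L/A)\otimes_R(L/A),L/A\big)$ — here one uses that $F^2/F^1$ surjects onto $(L/A)\otimes_R(L/A)$ and that the pullback of the bottom extension along $L/A\to F^2/F^1\to (L/A)\otimes_R(L/A)$ is exactly the defining extension of $\alpha$.

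Now suppose the filtration of $i^*i_!({\bf 1}_A)$ splits as a filtered $A$-module. Then in particular the bottom row $0\to L/A\to G^2/G^0\to G^2/G^1\to 0$ splits: a filtered splitting of $i^*i_!({\bf 1}_A)$ produces an $A$-linear section $G^2/G^1\to G^2/G^0$. Pulling this back through the right-hand vertical map $F^2/F^1\to G^2/G^1$ yields an $A$-linear section of the top row over $F^2/F^1$, hence the top extension is also split. Since that top extension is a representative of $\alpha$ (after further restricting along the surjection $F^2/F^1\twoheadrightarrow (L/A)\otimes_R(L/A)$, which the argument of Proposition \ref{prop-4.3} already handles), we conclude $\alpha=0$. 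The cleanest phrasing is: a splitting of the filtration on $i^*i_!({\bf 1}_A)$ forces a splitting of the filtration on $j^*j_!({\bf 1}_A)$ via the map induced by $A^{(1)}\to L$ on the relevant subquotients, and then Proposition \ref{prop-4.3} applies verbatim.

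The one point that needs genuine care — and which I expect to be the main obstacle — is the compatibility of the two filtered quotient diagrams: one must check that the map $j^*j_!({\bf 1}_A)\to i^*i_!({\bf 1}_A)$ does restrict to an \emph{isomorphism} $F^2/F^0\tilde\longrightarrow$ (its image in $G^2/G^0$) in the degrees that matter, so that a splitting genuinely transports backwards rather than merely forwards. This rests on the identification $F^1U(A^{(1)})\tilde\longrightarrow U(L)^{\le 1}$ from \S~\ref{sec-4.1} together with the fact that both $F^2/F^1$ and $G^2/G^1$ are built from the same pieces $(L/A)\otimes_R(L/A)$; the subtlety is bookkeeping with the two different filtrations (the $F$-filtration coming from the free Lie algebroid construction versus the standard filtration $G$ on $U(L)$) and making sure the $A$-module structures agree under the comparison, using \eqref{eq-AmodU}. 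Once that compatibility is pinned down, the rest is a diagram chase identical in spirit to the proof of Proposition \ref{prop-4.3}.
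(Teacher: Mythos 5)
Your argument is correct and is essentially the paper's own: the paper proves Proposition \ref{prop-if} by building the same ladder of extensions directly, via a $1$-filtered $A$-module map $\psi:U(L)\otimes_{U(A)}(L/A)\to U^+(L)/U^+(L)A$ whose degree-$\leq1$ piece maps the defining extension of $\alpha$ to $0\to G^1/G^0\to G^2/G^0\to G^2/G^1\to 0$ with the identity on $L/A$, so that a splitting of the bottom row transports to a splitting of the $\alpha$-extension; your version merely factors this map through the $F$-filtration of $j^*j_!({\bf 1}_A)$ and the diagrams of \S~\ref{sec-4.1}. Two small points: the surjection you invoke at the end of your second paragraph goes $(L/A)\otimes_R(L/A)\twoheadrightarrow F^2/F^1$ (not the reverse), and the ``compatibility'' you single out as the main obstacle is a non-issue, since the transport only requires the ladder to commute with an isomorphism on the kernels --- a retraction of the bottom row composed with the vertical map $F^2/F^0\to G^2/G^0$ is already a retraction of the top row, so no injectivity of that vertical map is needed.
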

\begin{proof}
We have a $1$-filtered $A$-module morphism $\psi:U(L)\oUA (L/A)\longrightarrow U^+(L)/U^+(L)A$, where 
$U^+(L)=\ker(\epsilon)\cong U(L)/R$ is equipped with the induced filtration. 
We then have the following commuting diagram, in which lines are exact: 
$$
\xymatrix{
0\ar[r] & L/A\ar[r] & \Big(U(L)\oUA (L/A)\Big)^{\leq1}\ar[r]\ar[d]^{\Psi} & (L/A)\otimes_R(L/A)\ar[r]\ar[d] & 0 \\
0\ar[r] & G^1/G^0\ar[r]\ar@{=}[u] & G^2/G^0\ar[r] & G^2/G^1\ar[r] & 0
}
$$
Finally, if the filtration of $i^*i_!({\bf 1}_A)$ splits then the bottom line in the above diagram splits too, 
and therefore so does the top one (of which the extension class is precisely $\alpha=\alpha_{L/A}$). 
\end{proof}

\subsection{Description of ${\rm gr}\big(i^*i_!({\bf 1}_A)\big)$}\label{sec-5.2}

We have the following coCartesian square of filtered (res.~graded) $R$-modules: 
$$
\vcenter{\xymatrix{
U\big(FR(L)\big) \ar[r]^{\xi}\ar[d] & j^*j_!({\bf 1}_A) \ar[d] \\
U(L) \ar[r] & i^*i_!({\bf 1}_A)}}\qquad\qquad
\left(\textrm{resp. }\vcenter{\xymatrix{
T_R(L) \ar[r]^{\!\!\!\!\!\!{\rm gr}(\xi)}\ar[d] & {\rm gr}\big(j^*j_!({\bf 1}_A)\big) \ar[d] \\
{\rm gr}\big(U(L)\big) \ar[r] & {\rm gr}\big(i^*i_!({\bf 1}_A)\big)
}}\right)
$$
We have already seen (Lemma \ref{lem-useful}) that the associated graded of 
$\xi:U\big(FR(L)\big)\longrightarrow j^*j_!({\bf 1}_A)$ descends to a surjective map 
$T_R(L)/\langle A\rangle\longrightarrow{\rm gr}\big(j^*j_!({\bf 1}_A)\big)$. 
In a completely similar way one can prove that associated graded of the maps 
$U\big(FR(L)\big)\longrightarrow U(L)$ and $U\big(FR(L)\big)\longrightarrow i^*i_!({\bf 1}_A)$ 
descend to surjective maps $S_R(L)\longrightarrow{\rm gr}\big(U(L))\big)$ and 
$S_R(L)/\langle A\rangle\longrightarrow{\rm gr}\big(i^*i_!({\bf 1}_A)\big)$. 

\medskip

Let us assume that the following two properties hold: \\[-0.3cm]

\indent\indent {\it The surjective $R$-linear map $S_R(L)\longrightarrow{\rm gr}\big(U(L)\big)$ is an 
isomorphism.}\hfill $(\star)$\\[-0.3cm]

\indent\indent {\it The surjective $R$-linear map 
$T_R(L)/\langle A\rangle\longrightarrow{\rm gr}\big(j^*j_!({\bf 1}_A)\big)$ is an isomorphism. }
\hfill $(\star$'$)$\\[-0.3cm]
\begin{rems}
Property $(\star)$ is known to hold if $L$ is a projective $R$-module (see \cite{R}). 
Property $(\star)$ also implies Assumption $(\O)$ that we've made from \S~\ref{sec-L/A}. 
Thanks to the results of the previous Section, Property $(\star$'$)$ follows from the assumption that $\alpha=0$.  
\end{rems}
Then coCartesianity of the diagram 
\begin{equation}\label{eq-coCart}
\vcenter{\xymatrix{
T_R(L) \ar[r]\ar[d] & T_R(L)/\langle A \rangle \ar[d] \\
S_R(L) \ar[r] & S_R(L)/\langle A \rangle
}}
\end{equation}
ensures us that the surjective $R$-linear map $S_R(L)/\langle A\rangle\longrightarrow{\rm gr}\big(i^*i_!({\bf 1}_A)\big)$ 
is an isomorphism. 

\subsection{What if $\alpha=0$?}

In this paragraph we consider the following property:\\[-0.3cm]

\indent\indent {\it The map $T_R(L/A)\longrightarrow S_R(L/A)$ 
splits as a surjective morphism of graded $A$-modules. }
\hfill $(\star\star)$\\[-0.3cm]
 
\noindent Notice that the symmetrization map $S_R(L/A)\longrightarrow T_R(L/A)$ provides a splitting whenever $\mathbb{Q}\subset R$. 
If we assume that $(\star\star)$ and $\alpha=0$ hold, then we have a sequence 
\begin{equation}\label{eq-comp}
S_R(L/A)\hookrightarrow T_R(L/A)\cong j^*j_!({\bf 1}_A)\twoheadrightarrow i^*i_!({\bf 1}_A)
\end{equation}
of filtered $A$-module morphisms. 
\begin{lem}
Passing to associated graded in \eqref{eq-comp} we get exactly the surjective map 
$S_R(L)/\langle A\rangle\longrightarrow {\rm gr}\big(i^*i_!({\bf 1}_A)\big)$ that appears in \S~\ref{sec-5.2}. 
\end{lem}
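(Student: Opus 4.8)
The plan is to trace through the construction of the three maps in \eqref{eq-comp} and identify what each one becomes after passing to associated graded, then assemble these pieces. First I would recall that in \S~\ref{sec-4.2} the isomorphism $\varphi:j^*j_!({\bf 1}_A)\cong T_R(L/A)$ was built so that ${\rm gr}(\varphi\circ\xi)=\pi$, the canonical surjection $T_R(L)\to T_R(L/A)$; equivalently, under the identification ${\rm gr}\big(j^*j_!({\bf 1}_A)\big)\cong T_R(L)/\langle A\rangle$ furnished by $(\star')$ and Lemma \ref{lem-useful}, ${\rm gr}(\varphi)$ is the canonical isomorphism $T_R(L)/\langle A\rangle\xrightarrow{\ \sim\ }T_R(L/A)$ (since $\pi$ factors through $T_R(L)/\langle A\rangle$ as an isomorphism). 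So the middle arrow of \eqref{eq-comp}, after ${\rm gr}$, is just this identification.

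Next I would handle the outer two arrows. The surjection $T_R(L/A)\twoheadrightarrow i^*i_!({\bf 1}_A)$ in \eqref{eq-comp} is, by construction, the composite of $\varphi^{-1}$ with the quotient map $j^*j_!({\bf 1}_A)\twoheadrightarrow i^*i_!({\bf 1}_A)$ coming from the coCartesian square of \S~\ref{sec-5.2}. Passing to ${\rm gr}$ and using the previous paragraph together with the descriptions in \S~\ref{sec-5.2}, this becomes the natural surjection $T_R(L/A)\cong T_R(L)/\langle A\rangle\twoheadrightarrow S_R(L)/\langle A\rangle\cong{\rm gr}\big(i^*i_!({\bf 1}_A)\big)$ induced by $T_R(L)\twoheadrightarrow S_R(L)$; here I would invoke coCartesianity of \eqref{eq-coCart} to see that the bottom map of the square of \S~\ref{sec-5.2} is exactly $T_R(L)/\langle A\rangle\to S_R(L)/\langle A\rangle$. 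For the inclusion $S_R(L/A)\hookrightarrow T_R(L/A)$ provided by $(\star\star)$: since it is a graded $A$-module splitting of the surjection $T_R(L/A)\to S_R(L/A)$, its composite with $T_R(L/A)\to S_R(L/A)$ is the identity, hence the composite of all three maps in \eqref{eq-comp}, followed by the identification $S_R(L)/\langle A\rangle\cong S_R(L/A)$ (which is the symmetric-algebra analogue of the isomorphism $T_R(L)/\langle A\rangle\cong T_R(L/A)$), is precisely the surjection $S_R(L/A)\cong S_R(L)/\langle A\rangle\to{\rm gr}\big(i^*i_!({\bf 1}_A)\big)$ of \S~\ref{sec-5.2}. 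Taking ${\rm gr}$ commutes with composition, so ${\rm gr}$ of \eqref{eq-comp} is this map.

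The one genuinely careful point — and the main obstacle — is bookkeeping the identification ${\rm gr}\big(j^*j_!({\bf 1}_A)\big)\cong T_R(L)/\langle A\rangle$ and checking it is compatible, on the nose, with both ${\rm gr}(\varphi)$ and with the coCartesian square; in particular one must verify that the splitting $(\star\star)$, which is only assumed at the level of $T_R(L/A)$ and $S_R(L/A)$, interacts correctly with the filtrations (it is a map of graded modules, so it is automatically strictly filtered for the filtration by word length, which is what is used here). Once these identifications are pinned down, the statement is just the observation that a composite of filtered maps has associated graded equal to the composite of the associated gradeds, combined with the fact that a splitting composed with its projection is the identity. I would therefore keep the proof short: recall ${\rm gr}(\varphi)$, unwind the two quotient/inclusion maps, and conclude by functoriality of ${\rm gr}$.
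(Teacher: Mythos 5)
Your argument is correct and is essentially the paper's own proof: the paper simply records the same identifications as a single commutative diagram (with columns given by the splitting/projection $S_R(L/A)\leftrightarrows T_R(L/A)$, the map $T_R(L)/\langle A\rangle\to S_R(L)/\langle A\rangle$, and ${\rm gr}\big(j^*j_!({\bf 1}_A)\big)\to{\rm gr}\big(i^*i_!({\bf 1}_A)\big)$), whereas you walk through the same diagram in prose, computing ${\rm gr}$ of each of the three arrows in \eqref{eq-comp} and using that the splitting composed with the projection is the identity. No gap; the two write-ups are interchangeable.
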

\begin{proof}
It follows from the commutativity of the diagram
$$
\xymatrix{
T_R(L/A)\ar[d]\ar[r]^{\!\!\!\!\sim} & T_R(L)/\langle A\rangle\ar[r]^{\!\!\!\!\sim}\ar[d] 
& {\rm gr}\big(j^*j_!({\bf 1}_A)\big)\ar[d] \\
S_R(L/A)\ar@<1ex>[u]\ar[r]^{\!\!\!\!\sim} & S_R(L)/\langle A\rangle\ar[r] & {\rm gr}\big(i^*i_!({\bf 1}_A)\big) 
}
$$
where the leftmost (upward) arrow is the same as the leftmost arrow in \eqref{eq-comp} (i.e.~a given splitting 
of $T_R(L/A)\twoheadrightarrow S_R(L/A)$). 
\end{proof}
If we further assume that Property $(\star)$ holds, then we have proved in \S~\ref{sec-5.2} that this map is 
an isomorphism (remember that $\alpha=0$ implies $(\star$'$)$). Therefore, we have: 
\begin{thm}\label{thm-main2}
Assume Properties $(\star)$ and $(\star\star)$ hold. 
Then $\alpha=0$ if and only if there exists an isomorphism of filtered $A$-module 
$S_R(L/A)\longrightarrow i^*i_!({\bf 1}_A)$. 
\end{thm}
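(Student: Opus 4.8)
The plan is to read off the equivalence as a synthesis of what has already been done in \S~\ref{sec-5.2} and \S~5.3, treating the two implications in turn; no new idea is needed.

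\emph{The implication $\alpha=0\Rightarrow$ existence of a filtered isomorphism.} Assume $\alpha=0$. As Property $(\star\star)$ is in force, the composition of the maps in \eqref{eq-comp} is a filtered $A$-module morphism $S_R(L/A)\longrightarrow i^*i_!({\bf 1}_A)$, and by the Lemma above its associated graded is exactly the surjection $S_R(L)/\langle A\rangle\longrightarrow{\rm gr}\big(i^*i_!({\bf 1}_A)\big)$ considered in \S~\ref{sec-5.2}. Now $\alpha=0$ implies Property $(\star')$ (Theorem \ref{thm-important}); combined with the assumed Property $(\star)$, \S~\ref{sec-5.2} shows that this graded map is an isomorphism. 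It then suffices to invoke the standard fact that a morphism of filtered modules which is an isomorphism on associated graded is itself an isomorphism, provided both filtrations are exhaustive and bounded below — which is the case here, since $G^{-1}=0$ and the filtration of $S_R(L/A)$ vanishes in negative degrees. Hence \eqref{eq-comp} is the sought isomorphism of filtered $A$-modules, and moreover its associated graded is the canonical identification.

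\emph{The implication existence of a filtered isomorphism $\Rightarrow\alpha=0$.} Conversely, assume there is an isomorphism of filtered $A$-modules $S_R(L/A)\longrightarrow i^*i_!({\bf 1}_A)$. The point is that the filtration of $S_R(L/A)$ already splits \emph{as $A$-modules}: each homogeneous component $S_R^k(L/A)$ is an $A$-submodule (being a quotient of $(L/A)^{\otimes_R k}$, with $A$ acting through the coproduct of $U(A)$), so that $S_R(L/A)=\bigoplus_k S_R^k(L/A)$ as graded $A$-modules and its filtration is split by its grading. Transporting this splitting along the given isomorphism, the filtration of $i^*i_!({\bf 1}_A)$ splits as $A$-modules, and Proposition \ref{prop-if} then gives $\alpha=0$.

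Since the argument is essentially bookkeeping, I do not anticipate a genuine obstacle. The point I would verify most carefully is, in the first implication, that the filtration on $S_R(L/A)$ which \eqref{eq-comp} matches with $G^\bullet$ is indeed the standard filtration by subspaces of degree $\leq k$ — this is immediate from the construction of \eqref{eq-comp}, whose first map preserves the grading — and that the passage ``isomorphism on associated graded $\Rightarrow$ isomorphism'' requires no completeness or separatedness hypothesis, both filtrations being bounded below. Everything else is Proposition \ref{prop-if}, the computation of ${\rm gr}\big(i^*i_!({\bf 1}_A)\big)$ in \S~\ref{sec-5.2}, and the Lemma identifying the associated graded of \eqref{eq-comp}.
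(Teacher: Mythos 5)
Your proposal is correct and follows essentially the same route as the paper: the forward direction is exactly the composition \eqref{eq-comp} together with the Lemma identifying its associated graded and the isomorphism statement of \S~\ref{sec-5.2} (using $(\star)$ and the fact that $\alpha=0$ yields $(\star$'$)$), and the converse is Proposition \ref{prop-if} applied after observing that the grading of $S_R(L/A)$ splits its filtration as $A$-modules. The only difference is that you spell out the converse and the ``isomorphism on associated graded'' step more explicitly than the paper does, which is harmless.
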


\section{The case of an arbitrary $A$-module $E$}\label{sec-6}

Let now $E$ be an $A$-module, and consider the following two $A$-modules: 
$$
j^*j_!(E):=U\big(A^{(1)}\big)\oUA E \qquad\textrm{and}\qquad i^*i_!(E):=U(L)\oUA E\,.
$$
We denote by $F_E^n$ and $G_E^n$ the filtration pieces on those two filtered $A$-modules\footnote{Even though 
the filtered pieces of $U\big(A^{(1)}\big)$ and $U(L)$ are not $A$-modules, $F^n_E$ and $G^n_E$ are. Namely, 
for any $a\in A$ and any $P\otimes e$ in $F^n_E$ (resp.~$G^n_E$), 
$a(P\otimes e)=aP\otimes e=([a,P]-Pa)\otimes e=[a,P]\otimes e-P\otimes ae\in F^n_E~(\textrm{resp.}~G^n_E)$. }. 
One sees that 
$$
F^0_E=G^0_E=E\,, \quad F^1_E=G_E^1=\Big(U(L)\oUA E\Big)^{\leq1}\,,\quad\textrm{and}\quad F^1_E/F^0_E=G^1_E/G^0_E=(L/A)\otimes_RE\,.
$$
Therefore, if the filtration on either $j^*j_!(E)$ or $i^*i_!(E)$ splits then $\alpha_E=0$. 
We start with the following generalization of Theorem \ref{thm-important}: 
\begin{thm}\label{thm-E1}
Assume that $E$ is faithful\footnote{Here we mean that $E$ is faithful as an $R$-module, which ensures that 
$-\otimes_RE:A\textrm{-}mod\to A\textrm{-}mod$ is faithful (because the forgetful functor 
$A\textrm{-}mod\to R\textrm{-}mod$ is) and thus reflects exact sequences. }. 
Then both classes $\alpha$ and $\alpha_E$ vanish if and only if there 
exists an isomorphism filtered $A$-modules $\varphi_E:j^*j_!(E)\longrightarrow T_R(L/A)\otimes_RE$ such that 
${\rm gr}\big(\varphi_E\circ\xi_E)\big)=\pi_E$. 
\end{thm}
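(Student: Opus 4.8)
The strategy is to mimic the proof of Theorem~\ref{thm-important} with $\mathbf 1_A$ replaced by the $A$-module $E$ throughout, using the faithfulness of $E$ at exactly one point. First I would observe that, since $E$ is faithful as an $R$-module, the functor $-\otimes_R E$ is faithful and exact on $R$-modules, hence reflects short exact sequences; so applying $-\otimes_R E$ to diagrams~\eqref{eq-diag1} and~\eqref{eq-diag2} (for the $A$-module $L/A$) shows that the top exact sequence in $-\otimes_R E$ of \eqref{eq-diag2} has class $\alpha\otimes\mathrm{id}_E$, whose vanishing is equivalent to $\alpha=0$. Combining this with the observation made just before the statement (that splitting of the filtration on $j^*j_!(E)$ forces $\alpha_E=0$, via $F^0_E=E$, $F^1_E/F^0_E=(L/A)\otimes_R E$), I get the ``only if'' direction: an isomorphism $\varphi_E$ as in the statement makes the filtration of $j^*j_!(E)$ split, so $\alpha_E=0$; and feeding this splitting into the $(L/A)$-analogue of~\eqref{eq-diag2} gives $\alpha=0$ as well.

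For the ``if'' direction, assume $\alpha=\alpha_E=0$. By Proposition~\ref{prop-3.2} both $L/A$ and $E$ lift to $A^{(1)}$-modules; write $T_R(L/A)\otimes_R E$ with its resulting graded $A^{(1)}$-module structure, denoted $\cdot$. Exactly as in \S~\ref{sec-4.2}, for $l\in L$ and $P\in T_R(L/A)\otimes_R E$ I set $l\bullet P:=l\cdot P+\bar l\otimes P$, where now $\bar l\otimes P$ means tensoring on the left of the $T_R(L/A)$-factor. The verification that $\bullet$ is a filtered $FR(L)$-module structure descending to an $A^{(1)}$-module structure restricting to the original $A$-action is formally identical to the proof of Lemma~\ref{lem-4.4} (the $E$-factor is merely carried along, since the $A^{(1)}$-action on $E$ is the one coming from $\alpha_E=0$ and is compatible). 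This produces the filtered $A^{(1)}$-module morphism $\varphi_E:j^*j_!(E)=U(A^{(1)})\otimes_{U(A)}E\to T_R(L/A)\otimes_R E$, $P\otimes e\mapsto P\bullet e$, with $\mathrm{gr}(\varphi_E\circ\xi_E)=\pi_E$ by construction; since $\pi_E$ is surjective and the filtrations are exhaustive, $\varphi_E$ is surjective.

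It remains to prove $\varphi_E$ is an isomorphism, i.e.\ that $\mathrm{gr}(\varphi_E)$ is. Here I would run the $E$-twisted version of Lemma~\ref{lem-useful}: the kernel of $\xi_E:U(FR(L))\otimes_{U(A)}E\to j^*j_!(E)$ is generated (over $U(FR(L))$, acting on the left) by the image of $[a,l]_{FR(L)}-[a,l]_L$ and by $A\cdot(U(FR(L))\otimes_{U(A)}E)$ absorbed into the $E$-factor; passing to associated graded, $\langle A\rangle\cdot(T_R(L)\otimes_R E)$ lies in $\ker\,\mathrm{gr}(\xi_E)$, so $\mathrm{gr}(\xi_E)$ factors through $(T_R(L)/\langle A\rangle)\otimes_R E\cong T_R(L/A)\otimes_R E$. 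Composing the induced surjection $(T_R(L)/\langle A\rangle)\otimes_R E\twoheadrightarrow \mathrm{gr}(j^*j_!(E))$ with $\mathrm{gr}(\varphi_E)$ recovers the canonical isomorphism $(T_R(L)/\langle A\rangle)\otimes_R E\xrightarrow{\ \sim\ }T_R(L/A)\otimes_R E$, forcing both maps to be isomorphisms; in particular $\mathrm{gr}(\varphi_E)$, hence $\varphi_E$, is an isomorphism.

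I expect the only genuinely delicate point to be the bookkeeping in the $E$-twisted version of Lemma~\ref{lem-useful}: one must be careful that $U(FR(L))\otimes_{U(A)}E$ is not a ring, so ``two-sided ideal'' must be replaced by the correct statement about the left $U(FR(L))$-submodule generated by the relators together with the part of $E$ that gets killed, and one must check this passes cleanly to $\mathrm{gr}$. The faithfulness hypothesis on $E$ is used only in the ``only if'' direction (to deduce $\alpha=0$ from the existence of $\varphi_E$); the ``if'' direction needs no such assumption. Everything else is a routine transcription of Section~\ref{sec-4} with an extra tensor factor.
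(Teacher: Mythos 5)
Your proposal is correct and follows essentially the same route as the paper's (itself only sketched) proof: deduce $\alpha_E=0$ from the splitting of the filtration via $F^0_E=E$ and $F^1_E/F^0_E=(L/A)\otimes_RE$, deduce $\alpha=0$ by comparing the $E$-filtration with the ${\bf 1}_A$-filtration tensored by $E$ and invoking faithfulness, and in the converse direction build $\varphi_E$ from the two $A^{(1)}$-lifts and check it is an isomorphism on associated graded using the $E$-twisted version of Lemma \ref{lem-useful}. The only presentational difference is that you redo the $\bullet$-construction of \S~\ref{sec-4.2} on $T_R(L/A)\otimes_RE$ directly, whereas the paper factors $\varphi_E=(\varphi\otimes{\rm id}_E)\circ j^*\eta_E$ through the comparison map $\eta_E:j_!(E)\to j_!({\bf 1}_A)\otimes_RE^{(1)}$, $P\otimes e\mapsto P\cdot(1\otimes e)$, reusing Theorem \ref{thm-important} as a black box; the two maps coincide. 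One point in your ``only if'' direction needs to be made explicit: ``applying $-\otimes_RE$ to \eqref{eq-diag2}'' produces a diagram whose terms are $(F^k/F^0)\otimes_RE$, and these are not literally the filtration quotients $F^k_E/F^0_E$ of $j^*j_!(E)$, so to transport the splitting you need precisely the (surjective) comparison map $\eta_E$ above --- which is only defined once $E$ has been lifted to $E^{(1)}$, i.e.\ after $\alpha_E=0$ has already been established. This is the content of the paper's diagram \eqref{eq-diag3}, and the order of deductions (first $\alpha_E=0$, then $\alpha=0$) matters for the argument to close; with that map named, your sketch goes through as written, and your closing remark about replacing ``two-sided ideal'' by the correct left-submodule statement in the $E$-twisted Lemma \ref{lem-useful} is exactly the right caveat.
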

\noindent Here $\xi_E:U\big(FR(L)\big)\oR E\overset{\xi\otimes{\rm id}_E}{\longrightarrow}U\big(A^{(1)}\big)\oUA E$ 
and $\pi_E:T_R(L)\otimes_RE\overset{\pi\otimes{\rm id}_E}{\longrightarrow}T_R(L/A)\otimes_RE$. 
\begin{proof}[Sketch of Proof]
From the above we can assume that $\alpha_E=0$, which means that the $A$-module $E$ lifts to an 
$A^{(1)}$-module $E^{(1)}$. 
This allows one to construct a surjective filtered morphism of $A^{(1)}$-modules 
$$
\eta_E:j_!(E)\longrightarrow j_!({\bf 1}_A)\otimes_R E^{(1)}\,;\quad
P\otimes e\longmapsto P\cdot(1\otimes e)\,,\quad\big(P\in U\big(A^{(1)}\big)\textrm{ and }e\in E\big)\,.
$$
It is well-defined: for any $a\in A$, $(Pa)\cdot(1\otimes e)=P\cdot(a\otimes e+1\otimes ae)=P\cdot(1\otimes ae)$. 

We now prove an analog of Proposition \ref{prop-4.3} in \S~\ref{sec-4.1}. 
We have the following commutative diagram of $A$-modules in which lines are exact: 
\begin{equation}\label{eq-diag3}
\vcenter{\xymatrix{
0 \ar[r] & (L/A)\otimes_RE \ar[r]\ar@{=}[d] & F_E^2/F_E^0 \ar[r]\ar@{->>}[d] & F_E^2/F_E^1 \ar[r]\ar@{->>}[d] & 0 \\
0 \ar[r] & (L/A)\otimes_RE \ar[r] & (F^2/F^0)\otimes_RE \ar[r] & (F^2/F^1)\otimes_RE \ar[r] & 0
}}
\end{equation}
If the filtration $(F_E^n)_{n\geq 0}$ splits (in $A$-mod) then so does the top line in the above diagram, 
and thus the bottom line splits too (this is because the rightmost vertical arrow in \eqref{eq-diag3} is 
surjective). Faithfulness of $E$ ensures that $0\to F^1/F^0\to F^2/F^0\to F^2/F^1\to 0$ splits, which implies 
that $\alpha=0$ (see \S~\ref{sec-4.1}). 

Conversely, if we assume that $\alpha=0$ then by Theorem \ref{thm-important} we get a surjective morphism of filtered 
$A$-modules $\varphi_E:=(\varphi\otimes{\rm id}_E)\circ j^*\eta_E:j^*j_!(E)\longrightarrow T_R(L/A)\otimes_RE$. 
We show it is an isomorphism by proving it on the level of associated graded. One can see that 
$\langle A\rangle\otimes_R E$ lies in the kernel of ${\rm gr}(\xi_E)$. 
To conclude, one just observes that on associated graded the composed surjection
$$
\Big(T_R(L)/\langle A\rangle\Big)\otimes_RE\overset{{\rm gr}(\xi_E)}{\longrightarrow} j^*j!(E)
\overset{{\rm gr}(\varphi_E)}{\longrightarrow}T_R(L/A)\otimes_RE
$$
coincide with the standard surjection $\big(T_R(L)/\langle A\rangle\big)\otimes_RE\twoheadrightarrow T_R(L/A)\otimes_RE$, 
which is an isomorphism. 
\end{proof}

We now generalize Theorem \ref{thm-main2}: 
\begin{thm}\label{thm-E2}
Assume that $E$ is faithful and that Properties $(\star)$ and $(\star\star)$ hold. 
Then $\alpha$ and $\alpha_E$ both vanish if and only if there exists an isomorphism of 
filtered $A$-module $S_R(L/A)\otimes_RE\longrightarrow i^*i_!(E)$. 
\end{thm}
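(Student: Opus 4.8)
\textbf{Proof plan for Theorem \ref{thm-E2}.}
The strategy is to bootstrap from Theorem \ref{thm-E1} exactly as Theorem \ref{thm-main2} was bootstrapped from Theorem \ref{thm-important} in \S~\ref{sec-5}, but carrying the auxiliary module $E$ through every step. First I would record the easy direction: if an isomorphism of filtered $A$-modules $S_R(L/A)\otimes_RE\longrightarrow i^*i_!(E)$ exists, then the filtration $(G^n_E)$ splits, and we have already observed at the beginning of Section \ref{sec-6} that this forces $\alpha_E=0$; it remains to extract $\alpha=0$, which should follow from a diagram chasing argument mimicking Proposition \ref{prop-if} together with the faithfulness of $E$ (the splitting of the $E$-twisted sequence $0\to (L/A)\otimes_RE\to G^2_E/G^0_E\to G^2_E/G^1_E\to 0$ implies, after tensoring-down via faithfulness, the splitting of $0\to L/A\to G^2/G^0\to G^2/G^1\to 0$, whose class is $\alpha$).

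For the converse, assume $\alpha=\alpha_E=0$. By Theorem \ref{thm-E1} we have an isomorphism of filtered $A$-modules $\varphi_E:j^*j_!(E)\longrightarrow T_R(L/A)\otimes_RE$ with ${\rm gr}(\varphi_E\circ\xi_E)=\pi_E$. Now I would reproduce the reasoning of \S~\ref{sec-5.2}--\S~5.3 with $E$ inserted. Property $(\star\star)$ gives a graded $A$-module splitting $S_R(L/A)\hookrightarrow T_R(L/A)$; tensoring with $E$ over $R$ yields $S_R(L/A)\otimes_RE\hookrightarrow T_R(L/A)\otimes_RE$, and composing with $\varphi_E^{-1}$ and the canonical surjection $j^*j_!(E)\twoheadrightarrow i^*i_!(E)$ produces a filtered $A$-module morphism
$$
S_R(L/A)\otimes_RE\hookrightarrow T_R(L/A)\otimes_RE\cong j^*j_!(E)\twoheadrightarrow i^*i_!(E)\,.
$$
To see it is an isomorphism, I would pass to associated graded and identify this composite with the standard surjection $\big(S_R(L)/\langle A\rangle\big)\otimes_RE\twoheadrightarrow S_R(L/A)\otimes_RE$, using the coCartesian square \eqref{eq-coCart} tensored with $E$, the fact (from the proof of Theorem \ref{thm-E1}) that $\langle A\rangle\otimes_RE$ lies in $\ker({\rm gr}(\xi_E))$, and Property $(\star)$ tensored with $E$ to identify ${\rm gr}(U(L))\otimes_RE$ with $S_R(L)\otimes_RE$. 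Since $\big(S_R(L)/\langle A\rangle\big)\otimes_RE\to S_R(L/A)\otimes_RE$ is visibly an isomorphism (as $S_R(L)/\langle A\rangle\cong S_R(L/A)$ and $\otimes_RE$ is exact on this split short exact sequence), ${\rm gr}$ of our map is an isomorphism, hence so is the map itself.

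The main obstacle, as in the untwisted case, is bookkeeping rather than conceptual: one must check that all the identifications of associated graded objects are compatible after tensoring with $E$, i.e.~that $-\otimes_RE$ commutes with taking ${\rm gr}$ of the relevant filtrations and with the coCartesian squares involved. This is where faithfulness and the exactness of $-\otimes_RE$ (the forgetful functor $A\textrm{-}mod\to R\textrm{-}mod$ being exact and faithful) are used; I expect the verification that ${\rm gr}\big(i^*i_!(E)\big)$ is computed by the bottom-right corner of the $E$-twisted coCartesian square — equivalently that no extra relations appear — to be the delicate point, but it is handled verbatim as in Lemma \ref{lem-useful} and \S~\ref{sec-5.2} once one knows $\langle A\rangle\otimes_RE\subset\ker({\rm gr}(\xi_E))$, which the proof of Theorem \ref{thm-E1} already supplies.
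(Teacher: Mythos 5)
Your converse direction ($\alpha=\alpha_E=0$ implies the existence of the isomorphism) follows the paper essentially verbatim: the composite $S_R(L/A)\otimes_RE\hookrightarrow T_R(L/A)\otimes_RE\cong j^*j_!(E)\twoheadrightarrow i^*i_!(E)$ built from the $(\star\star)$-splitting and Theorem \ref{thm-E1}, checked to be an isomorphism on associated graded via the coCartesian square \eqref{eq-coCart} tensored with $E$ together with Property $(\star)$. The paper packages the graded computation slightly differently (through a lemma asserting that ${\rm gr}(j^*\eta_E)$ descends to an isomorphism ${\rm gr}\big(i^*i_!(E)\big)\cong{\rm gr}\big(i^*i_!({\bf 1}_A)\otimes_RE\big)$, proved with a three-dimensional coCartesian diagram), but the content agrees with your ``tensor \S~\ref{sec-5.2} with $E$'' plan.

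The gap is in the easy direction, in the step where you extract $\alpha=0$. You propose to ``tensor down'' the split sequence $0\to(L/A)\otimes_RE\to G_E^2/G_E^0\to G_E^2/G_E^1\to 0$ to the untwisted sequence $0\to L/A\to G^2/G^0\to G^2/G^1\to 0$ using faithfulness. But faithfulness of $-\otimes_RE$ can only be brought to bear once you have identified the twisted sequence with the untwisted one tensored by $E$, and there is no natural comparison map between $G_E^2/G_E^0=\big(U(L)\oUA E\big)^{\leq2}/E$ and $(G^2/G^0)\otimes_RE$: the naive assignment $P\otimes e\mapsto \bar P\otimes e$ is not well defined modulo $U(L)A$, and producing a correct one amounts to lifting the $A$-action on $E$ to an $L$-action, which is not available. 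The paper circumvents this by first transporting the splitting from the $G_E$-filtration to the $F_E$-filtration via diagram \eqref{eq-diag4} (legitimate because the leftmost vertical arrow there is an identity, so a retraction of the bottom line restricts to one of the top line), and only then comparing $F_E^2/F_E^0$ with $(F^2/F^0)\otimes_RE$ through the map $\eta_E$ of diagram \eqref{eq-diag3} --- and $\eta_E$ exists precisely because $\alpha_E=0$ has already been secured, i.e.\ because $E$ lifts to an $A^{(1)}$-module (though not to an $L$-module). So the descent to the untwisted sequence must happen at the level of the first infinitesimal neighbourhood, not at the level of $L$; once your plan is amended accordingly it coincides with the paper's argument.
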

\begin{proof}[Sketch of Proof]
As before we can assume that $\alpha_E=0$, so that there exists an $A^{(1)}$-module $E^{(1)}$ such that $j^*E^{(1)}=E$. 
In particular, as we have seen in the proof of Theorem \ref{thm-E1}, there exists a surjective morphism of filtered $A$-modules 
$j^*\eta_E:j^*j_!(E)\longrightarrow j^*j_!({\bf 1}_A)\otimes_RE$. 

We first go through the analog of Proposition \ref{prop-if} in \S~\ref{sec-5.1}. 
We have the following commutative diagram of $A$-modules in which lines are exact: 
\begin{equation}\label{eq-diag4}
\vcenter{\xymatrix{
0 \ar[r] & (L/A)\otimes_RE \ar[r]\ar@{=}[d] & F_E^2/F_E^0 \ar[r]\ar[d] & F_E^2/F_E^1 \ar[r]\ar[d] & 0 \\
0 \ar[r] & (L/A)\otimes_RE \ar[r] & G_E^2/G_E^0 \ar[r] & G_E^2/G_E^1 \ar[r] & 0
}}
\end{equation}
If the filtration of $i^*i_!(E)$ splits then so does the bottom line in diagram \eqref{eq-diag4}, and thus 
the top line in the same diagram splits too. Therefore $\alpha=0$ (see the proof of Theorem \ref{thm-E1}). 

Let us now assume that $\alpha=0$. Then we have seen in the proof of Theorem \ref{thm-E1} that $j^*\eta_E$ is 
an isomorphism of filtered $A$-modules. 
\begin{lem}
${\rm gr}(j^*\eta_E)$ descends to an $A$-module isomorphism 
${\rm gr}\big(i^*i_!(E)\big)\tilde\longrightarrow{\rm gr}\big(i^*i_!({\bf 1}_A)\otimes_RE\big)$. 
\end{lem}
\begin{proof}[Proof of the Lemma]
We have two coCartesian diagrams of filtered $R$-modules 
$$
\vcenter{\xymatrix{
U\big(FR(L)\big)\oR E \ar@{->>}[r]\ar@{->>}[d] & j^*j_!(E) \ar@{->>}[d] \\
U(L)\oR E \ar@{->>}[r] & i^*i_!({\bf 1}_A)}}\qquad
\textrm{ and }\qquad\vcenter{\xymatrix{
U\big(FR(L)\big)\otimes_RE \ar@{->>}[r]\ar@{->>}[d] & j^*j_!({\bf 1}_A)\otimes_RE \ar@{->>}[d] \\
U(L)\otimes_RE \ar@{->>}[r] & i^*i_!({\bf 1}_A)\otimes_R
}}
$$
Passing to associated graded $R$-modules we get the following commutative diagram in which squares are coCartesian: 
$$
\xymatrix{
T_R(L)\otimes_RE \ar@{->>}[rd]\ar@{->>}[dd]\ar@{->>}[rr] && {\rm gr}\big(j^*j_!(E)\big)\ar[ld]^{j^*\eta_E}_{\sim}\ar@{->>}[dd] \\
& {\rm gr}\big(j^*j_!({\bf1}_A)\otimes_RE\big) \ar@{->>}[dd] & \\
S_R(L)\otimes_RE \ar@{->>}[rd]\ar@{->>}[rr]|!{[ur];[dr]}\hole && {\rm gr}\big(i^*i_!(E)\big) \\
&  {\rm gr}\big(i^*i_!({\bf 1}_A)\otimes_RE\big) & 
}
$$
Therefore we get an $R$-linear isomorphism ${\rm gr}\big(i^*i_!(E)\big)\tilde\longrightarrow {\rm gr}\big(i^*i_!({\bf 1}_A)\otimes_RE\big)$, 
which happens to be $A$-linear (this is because the square it files is made of surjective $A$-linear maps). 
\end{proof}
In particular, applying $-\otimes_RE$ to the diagram \eqref{eq-coCart} ensures that the surjective morphism of graded $R$-modules 
$S_R(L/A)\otimes_RE\cong\big(S_R(L)/\langle A\rangle\big)\otimes_RE\twoheadrightarrow {\rm gr}\big(i^*i_!(E)\big)$ is an isomorphism. 
We finally have a sequence of filtered morphisms of $A$-modules
$$
S_R(L/A)\otimes_RE\hookrightarrow T_R(L/A)\otimes_RE\overset{\varphi_E^{-1}}{\longrightarrow}
j^*j_!(E)\twoheadrightarrow i^*i_!(E)\,,
$$
for which one can check that the associated graded is the above isomorphism 
$S_R(L/A)\otimes_RE\tilde\longrightarrow{\rm gr}\big(i^*i_!(E)\big)$. 
\end{proof}

\section{Proof of the main Theorems (Theorems \ref{thm-main} and \ref{thm-E0}) and perspectives}\label{sec-7}

All what we have done so far can be sheafified. I.e.~everything remains true if we 
replace ${\bf k}$ by a sheaf of rings $\mathcal K$, $R$ by a sheaf of $\mathcal K$-algebras 
$\mathcal R$, $i:A\hookrightarrow L$ by an inclusion of sheaves of Lie algebroids 
$i:\mathcal A\hookrightarrow\mathcal L$ over $\mathcal R$, and $E$ by an $\mathcal A$-module 
$\mathcal E$.  

We now deduce Theorem \ref{thm-main} from the sheafified version of Theorem \ref{thm-main2}. 
\begin{proof}[Proof of Theorem \ref{thm-main}]
From now $\mathcal K=\underline{{\bf k}}_X$ and ${\bf k}$ is a field of zero characteristic. 
The equivalence between (1) and (2) is the sheafified version of Proposition \ref{prop-3.2}. 

Assume now that $\mathcal L$ is locally free as an $\mathcal R$-modules so that, after \cite{R}, 
Property $(\star)$ holds. Finally, ${\bf k}$ being of zero characteristic we can take the symmetrization map 
$S_{\mathcal R}(\mathcal L/\mathcal A)\longrightarrow T_{\mathcal R}(\mathcal L/\mathcal A)$ as a splitting 
of $T_{\mathcal R}(\mathcal L/\mathcal A)\longrightarrow S_{\mathcal R}(\mathcal L/\mathcal A)$ in $\mathcal A$-modules. 
Therefore Property $(\star\star)$ holds. 

We apply (the sheafified version of) Theorem \ref{thm-main2} to get the desired result. 
\end{proof}
Similarly, we deduce Theorem \ref{thm-E0} from (the sheafified version of) Theorem \ref{thm-E2} 
(one just has to notice that locally free modules are faithful). 

\subsection*{Perspectives}

Let $\imath:X\hookrightarrow Y$ be a closed embeddings of smooth algebraic varieties. 
It is shown in \cite{CCT2} (see also \cite{Yu}) that the shifted normal bundle $\mathcal A=N_{X/Y}[-1]$ is a Lie algebroid object 
in the derived category $D({\bf k}_X)$ of sheaves of ${\bf k}$-modules on $X$. 
Namely, the anchor map $N_{X/Y}[-1]\longrightarrow T_X$ is given by the normal bundle exact sequence 
$$
0\longrightarrow T_X\longrightarrow \imath^*T_Y\longrightarrow N_{X/Y}\longrightarrow 0
$$
and the Lie bracket comes from the fact that $N_{X/Y}[-1]$ is the cohomology of the relative tangent complex 
$\mathbb{T}_{X/Y}$. Moreover, it is proved in \cite{CCT2} that the universal enveloping algebra of this Lie algebroid is 
$\imath^*\imath_!\mathcal O_X$. According to \cite{AC} it satisfies the PBW condition $(\star)$ if and only if a certain class in 
$Ext_{\mathcal O_X}^2\left(\wedge^2(N_{X/Y}),N_{X/Y}\right)$ vanishes. It can be understood as the class of the following extention: 
$$
0\longrightarrow N_{X/Y}[-1]\longrightarrow U^+(\mathcal A)^{\leq2}\longrightarrow S^2\left(N_{X/Y}[-1]\right)\longrightarrow 0\,.
$$

Now let $\jmath:Y\hookrightarrow Z$ be another closed embedding of smooth algebraic varieties and consider the Lie algebroid object 
$\mathcal L=N_{X/Z}[-1]$ in $D(X)$. We have a Lie algebroid inclusion $\mathcal A\to\mathcal L$, with $\mathcal L/\mathcal A=\imath^*N_{Y/Z}[-1]$. 
It would be interesting to understand the geometric meaning of our main result in this context, and its relation to the sequence of derived self-intersections $X\times^h_YX\to X\times^h_ZX\to Y\times^h_ZY$. 

\medskip

It would also be interesting to search for a geometric interpretation of our main result in the direction of what Bordemann 
did for the Lie algebra case in \cite{Bord}. 

\section*{Appendix: sketch of proof of Proposition \ref{prop-sameclasses}}

We borrow the notation from \S~\ref{sec-alpha} and start with the following: 
\begin{lem}\label{lem:appendix}
The $A$-module $J_{L/A}^1(E)$ is isomorphic to the kernel of the difference map
\begin{equation}\label{eq:appendix}
E\oplus{\rm Hom}_R\left(L/A,\left(U(L)\oUA E\right)^{\leq1}\right)\longrightarrow {\rm Hom}_R(L/A,L/A\otimes_RE)\,.
\end{equation}
\end{lem}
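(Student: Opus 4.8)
The plan is to exhibit an explicit $A$-linear isomorphism between $J^1_{L/A}(E)$ and the kernel $K$ of the difference map \eqref{eq:appendix}, and to identify the residual $A$-action on $J^1_{L/A}(E)$ (given by formula \eqref{eq-Amod}) with the $A$-action induced on $K$ by the product $A$-module structures on $E$ and on ${\rm Hom}_R\big(L/A,(U(L)\oUA E)^{\leq1}\big)$. Recall that $J^1_{L/A}(E)$ consists of the $A$-linear maps $\phi:U(L)^{\leq1}\to E$, i.e.\ $\phi(QP)=Q\phi(P)$ for $Q\in U(A)$, $P\in U(L)^{\leq1}$; since $U(L)^{\leq1}=R\oplus L$ and $\phi$ is $R$-linear, such a $\phi$ is determined by the pair $\big(\phi(1),\phi|_L\big)\in E\oplus{\rm Hom}_R(L,E)$, subject to the $A$-linearity constraint coming from $Q=a\in A$: $\phi(a)=a\phi(1)$ and $\phi(al)=a\phi(l)$ for $l\in L$ (the latter reading $\phi([a,l]+la)=a\phi(l)$, which since $la\in U(L)^{\leq2}$ needs care — this is exactly why the target of $\phi|_L$ should be enlarged to $(U(L)\oUA E)^{\leq1}$ rather than just $E$).

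First I would set up the comparison precisely. Using the short exact sequence \eqref{eq-alphabis}, the middle term $\big(U(L)\oUA E\big)^{\leq1}$ surjects onto $L/A\otimes_R E$ with kernel $E$; composing $\phi|_L:L\to E\hookrightarrow\big(U(L)\oUA E\big)^{\leq1}$ (after passing through the quotient $L\twoheadrightarrow L/A$, which is legitimate because $A$-linearity forces $\phi|_A$ to be determined by $\phi(1)$ and the relevant piece to factor) gives a map $L/A\to\big(U(L)\oUA E\big)^{\leq1}$ whose composite with the surjection to $L/A\otimes_R E$ must match the map $x\mapsto x\otimes\phi(1)$. That matching condition is exactly the statement that the pair lies in the kernel $K$ of the difference map \eqref{eq:appendix}. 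Conversely, given such a pair $(e,\theta)\in E\oplus{\rm Hom}_R(L/A,(U(L)\oUA E)^{\leq1})$ with $\theta$ landing, modulo $E$, in $x\mapsto x\otimes e$, one reconstructs $\phi$ on $U(L)^{\leq1}=R\oplus L$ by $\phi(r)=re$ and $\phi(l)=$ ``the $E$-component'' of $\theta(\bar l)$ corrected by the $U(A)$-action — concretely using \eqref{eq-AmodU}/\eqref{eq-Amod} to see that $\theta(\bar l)-\bar l\otimes e\in E$ and setting $\phi(l):=\theta(\bar l)-\bar l\otimes e$ plus the forced value on the $A$-part. I would then check this is well-defined (independent of the coset representative), $R$-linear, and satisfies \eqref{eq-comp2}.

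Next I would verify $A$-equivariance. On $J^1_{L/A}(E)$ the action is $(Q*\phi)(P)=\phi(PQ)$, which by \eqref{eq-Amod} equals $\phi([P,Q])+Q\phi(P)$; for $Q=a\in A$ and $P\in U(L)^{\leq1}$ one has $[P,a]\in U(L)^{\leq1}$ again (using $[l,a]\in L\subset U(L)^{\leq1}$), so the formula stays inside $J^1_{L/A}$ and is explicit in terms of the pair $(\phi(1),\phi|_L)$. On the other side, $E$ carries its $A$-module structure and ${\rm Hom}_R\big(L/A,(U(L)\oUA E)^{\leq1}\big)$ carries the internal-Hom $A$-action (using the $A$-action on $L/A$ from \S\ref{sec-L/A} and the filtered $A$-action on $U(L)\oUA E$ from \eqref{eq-AmodU}); the difference map in \eqref{eq:appendix} is $A$-linear, so its kernel $K$ is an $A$-submodule. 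A direct comparison of the two action formulas — both are first-order in the sense that $a$ acts by a ``Leibniz-type'' derivation — should show they agree under the bijection, the only subtlety being bookkeeping of where the bracket terms $[a,l]_L$ appear.

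\textbf{Main obstacle.} The routine-looking but genuinely delicate point is the well-definedness of the reconstruction map and of the whole identification at the level of the \emph{non-module} filtered piece $U(L)^{\leq1}$: neither $U(L)^{\leq1}$ nor ${\rm Hom}_R(U(L)^{\leq1},E)$ is an $A$-module on the nose, and the $A$-action only survives on the subspace $J^1_{L/A}(E)$ and on the kernel $K$. One must therefore track carefully, via \eqref{eq-AmodU} and \eqref{eq-Amod}, that every auxiliary element written (such as $\theta(\bar l)-\bar l\otimes e$, or $\phi(la)$) lands where it is supposed to, and that passing from $L$ to $L/A$ in the argument of $\theta$ is forced rather than assumed. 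Once that bookkeeping is done, the isomorphism and its $A$-linearity are formal; combined with the fact (used in the proof of Proposition \ref{prop-sameclasses}) that $\big(U(L)\oUA E\big)^{\leq1}$ and ${\rm Hom}_R\big(L/A,(U(L)\oUA E)^{\leq1}\big)$ compute, respectively, the relevant $A$-module incarnations of $\alpha_E$ and $\widetilde\alpha_E$, this lemma feeds directly into identifying the two classes in the derived category.
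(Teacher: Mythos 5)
Your proposal follows essentially the same route as the paper: an explicit pair of mutually inverse $A$-linear maps between $J^1_{L/A}(E)$ and the kernel of \eqref{eq:appendix}, with the kernel condition encoding that $\theta(\bar l)-l\otimes e$ lies in $E\subset\left(U(L)\oUA E\right)^{\leq1}$. One correction to the forward direction as you describe it: the map $l\mapsto 1\otimes\phi(l)$ (i.e.\ $\phi|_L$ followed by $E\hookrightarrow\left(U(L)\oUA E\right)^{\leq1}$) does \emph{not} factor through $L/A$, and its composite with the projection to $L/A\otimes_RE$ is zero rather than $x\mapsto x\otimes\phi(1)$. The map the paper actually uses --- and the one your own inverse formula $\phi(l):=\theta(\bar l)-l\otimes e$ forces --- is the combination $\ell(\phi)(l):=l\otimes\phi(1)-1\otimes\phi(l)$, which kills $A$ precisely because $a\otimes\phi(1)=1\otimes a\phi(1)=1\otimes\phi(a)$ in the tensor product over $U(A)$, and whose projection to $L/A\otimes_RE$ is $l\mapsto \bar l\otimes\phi(1)$ as required by the kernel condition. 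With that formula in place, the remaining steps you outline (the $A$-linearity check via \eqref{eq-Amod} and the well-definedness of the reconstruction, which is exactly the bookkeeping you flag as the main obstacle) coincide with the paper's proof.
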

\begin{proof}[Sketch of proof]
We first construct a map $\ell:J_{L/A}^1(E)\longrightarrow {\rm Hom}_R\left(L/A,\left(U(L)\oUA E\right)^{\leq1}\right)$; 
for any $\phi\in J_{L/A}^1(E)$ and any $l\in L$ we set 
$\ell(\phi)(l):=l\otimes \phi(1)-1\otimes \phi(l)\in \left(U(L)\oUA E\right)^{\leq1}$. 
First of all observe that for any $a\in A$, $\ell(\phi)(a)=a\otimes \phi(1)-1\otimes \phi(a)=a\otimes \phi(1)-1\otimes a\phi(1)=0$. 
Hence $\ell(\phi)$ factors through $L/A$. Then we show that $\ell$ is $A$-linear: for $\phi\in J_{L/A}^1(E)$, $a\in A$, and $l\in L$,  
\begin{eqnarray*}
\big(a\cdot\ell(\phi)\big)(l) 
& = & a\cdot\big(\ell(\phi)(l)\big)-\ell(\phi)([a,l]) \\
& = & al\otimes\phi(1)-a\otimes\phi(l)-[a,l]\otimes\phi(1)+1\otimes\phi([a,l]) \\
& = & la\otimes\phi(1)-a\otimes\phi(l)+1\otimes\phi([a,l]) \\
& = & l\otimes\phi(a)-1\otimes\phi(la) \\
& = & \ell(a*\phi)(l)\,.
\end{eqnarray*}
Finally, composing $\ell(\phi)$ with the epimorphism $\left(U(L)\oUA E\right)^{\leq1}\longrightarrow L/A\otimes_RE$ we get 
$l\mapsto l\otimes\phi(1)$, which coincides with the image of $\phi$ through $J_{L/A}^1(E)\to E\to {\rm Hom}_R(L/A,L/A\otimes_RE)$. 
This provides a morphism from $J_{L/A}^1(E)$ to the kernel of the difference map \eqref{eq:appendix}. 

We now construct an inverse to that map. For any $e+f$ in the kernel of \eqref{eq:appendix}, where $e\in E$ and 
$f\in{\rm Hom}_R\left(L/A,\left(U(L)\oUA E\right)^{\leq1}\right)$, we associate an element $\phi_{e,f}$ of $J_{L/A}^1(E)$ in the following way.  
For any $r\in R$ we set $\phi_{e,f}(r):=re$. If $l\in L$ then one notices that 
$f(l)-l\otimes e\in \left(U(L)\oUA E\right)^{\leq1}$ projects onto zero in $L/A\otimes_RE$ and thus has the form $1\otimes \phi_{e,f}(l)$. 
\end{proof}
\medskip
We now turn to the proof Proposition \ref{prop-sameclasses}. Let us assume that we have an extension 
$$
0\longrightarrow E\longrightarrow B\longrightarrow(L/A)\otimes_RE\longrightarrow 0
$$
representing a class $\beta\in {\rm Ext}^1_A\big((L/A)\otimes_RE,E\big)\cong{\rm Hom}_{D(A)}\big((L/A)\otimes_RE,E[1]\big)$. 
We would like to describe the image of $\beta$ through the map 
\begin{equation}\label{eq:ex-app}
{\rm Hom}_{D(A)}\big((L/A)\otimes_RE,E[1]\big)\longrightarrow
{\rm Hom}_{D(A)}\left((L/A)\overset{\mathbb{L}}{\otimes}_RE,E[1]\right)\cong
{\rm Hom}_{D(A)}\big(E,\mathbb{R}{\rm Hom}(L/A,E[1])\big)\,.
\end{equation}
One first chooses an injective resolution $\widetilde{E}$ of $E$ and considers the induced exact sequence of complexes 
$$
0\longrightarrow \widetilde E\longrightarrow \widetilde B\longrightarrow(L/A)\otimes_R\widetilde E\longrightarrow 0\,,
$$
where $\widetilde B$ is the cokernel of $E\longrightarrow \widetilde E\oplus B$; one then applies ${\rm Hom}_A(L/A,-)$ 
and get an exact sequence 
$$
0\longrightarrow {\rm Hom}_A\left(L/A,\widetilde E\right)\longrightarrow {\rm Hom}_A\left(L/A,\widetilde B\right)\longrightarrow
{\rm Hom}_A\left(L/A,(L/A)\otimes_R\widetilde E\right)\longrightarrow 0\,.
$$
One finally gets an exact sequence 
$$
0\longrightarrow {\rm Hom}_A\left(L/A,\widetilde E\right)\longrightarrow \widetilde C\longrightarrow\widetilde E\longrightarrow 0\,,
$$
where $\widetilde C$ is the kernel of 
$\widetilde E\oplus{\rm Hom}_A\left(L/A,\widetilde B\right)\longrightarrow{\rm Hom}_A\left(L/A,(L/A)\otimes_R\widetilde E\right)$. 
This defines the desired element in 
${\rm Hom}_{D(A)}\big(E,\mathbb{R}{\rm Hom}(L/A,)E[1]\big)\cong{\rm Hom}_{D(A)}\big(\widetilde E,{\rm Hom}(L/A,\widetilde E[1])\big)$. 

\begin{proof}[Proof of Proposition \ref{prop-sameclasses}]We apply the above constuction to $\beta=\alpha_E$, 
with $B=\left(U(L)\oUA E\right)^{\leq1}$. One first easily sees that 
$\widetilde B=\left(U(L)\oUA \widetilde E\right)^{\leq1}$. Then it follows from Lemma \ref{lem:appendix} that 
$\widetilde C=J^1_{L/A}(\widetilde E)$. Therefore, the image of $\alpha_E$ through the map \eqref{eq:ex-app} is determined by the 
exact sequence 
$$
0\longrightarrow {\rm Hom}_A\left(L/A,\widetilde E\right)\longrightarrow J_{L/A}^1(\widetilde E)\longrightarrow\widetilde E\longrightarrow 0\,,
$$
which is precisely the image of the class $\widetilde{\alpha}_E\in {\rm Ext}^1_A\big(E,{\rm Hom}(L/A,E)\big)$ through the map
$$
{\rm Ext}^1_A\big(E,{\rm Hom}(L/A,E)\big)\cong{\rm Hom}_{D(A)}\big(E,{\rm Hom}(L/A,E)[1]\big)\longrightarrow
{\rm Hom}_{D(A)}\big(E,\mathbb{R}{\rm Hom}(L/A,E)[1]\big)\,.
$$
The Proposition is proved. 
\end{proof}

\end{document}